\documentclass[12pt]{amsart}

\usepackage{a4wide}
\usepackage{amsmath, amssymb}
\usepackage{amsthm}
\usepackage[utf8]{inputenc}
\usepackage{subfigure}
\usepackage{enumerate}
\usepackage{cancel}
\usepackage{xspace}
\usepackage{float}
\usepackage{mathrsfs}
\usepackage[colorlinks, citecolor=blue, anchorcolor=blue, linkcolor=blue, urlcolor=blue]{hyperref}
\usepackage{color,xcolor}

\usepackage{wrapfig}

\usepackage{tikz}
\usetikzlibrary{patterns}

\usepackage{algorithm}
\usepackage{algorithmic}

%%%%%%%%%%% SYoung
\def\SYoung#1{\vbox{\smallskip\offinterlineskip
    \halign{&\vbox{##}\kern-\SThickness\cr #1}}}

\newdimen\SSquaresize \SSquaresize=4.5pt
\newdimen\SThickness \SThickness=.15pt
\newdimen\SCorrection \SCorrection=7pt

\def\SCarre#1{\hbox{\vrule width \SThickness
   \vbox to \SSquaresize{\hrule height \SThickness\vss
      \hbox to \SSquaresize{\hss$\scriptstyle#1$\hss}
   \vss\hrule height\SThickness}
   \unskip\vrule width \SThickness}
   \kern-\SThickness}

\allowdisplaybreaks

\makeatletter \@addtoreset{equation}{section}

\newtheorem{theorem}{Theorem}[section]
\newtheorem{proposition}[theorem]{Proposition}
\newtheorem{lemma}[theorem]{Lemma}

\newtheorem{conjecture}[theorem]{Conjecture}

\sloppy

\title[Analytic properties of Speyer's $g$-polynomial]{Analytic properties of Speyer's $g$-polynomial of uniform matroids}

\author[R. Zhang]{Rong Zhang}
       \address{Department of Mathematics and Physics, Luoyang Institute of Science and Technology, Luoyang 471023, P. R. China.}
       \email{zhr168@lit.edu.cn}

\author[James J. Y. Zhao]{James Jing Yu Zhao}
       \address{School of Accounting, Guangzhou College of Technology and Business,
       Foshan 528138, P. R. China.}
       \email{zhao@gzgs.edu.cn}

\subjclass{Primary 05B35, 26C10; Secondary 60F05, 62E20}

\keywords{Speyer's $g$-polynomial, uniform matroid, real zeros, interlacing property, asymptotic normality}

\begin{document}

\begin{abstract}
Let $U_{n,d}$ denote the uniform matroid of rank $d$ on $n$ elements.
We obtain some recurrence relations satisfied by Speyer's $g$-polynomials $g_{U_{n,d}}(t)$ of $U_{n,d}$.
Based on these recurrence relations,
we prove that the polynomial $g_{U_{n,d}}(t)$ has only real zeros for any $n-1\geq d\geq 1$.
Furthermore, we show that the coefficient of $g_{U_{n,[n/2]}}(t)$ is asymptotically normal by local and central limit theorems.
\end{abstract}

\maketitle

\section{Introduction}\label{sec:1}

In the study of the $f$-vector conjecture of tropical linear spaces, Speyer \cite{Speyer2009} discovered an invariant, denoted by $g_M(t)$,
for a (loopless and coloopless) $\mathbb{C}$-realizable matroid $M$.
This invariant has been used by Speyer to prove bounds on the complexity of Kapranov’s Lie
complexes \cite{Kapranov1993}, Hacking, Keel and Tevelev’s very stable pairs \cite{HKT2006}, and tropical linear spaces \cite{Speyer2008}. It turns out that this invariant is of independent combinatorial interest.
In a subsequent work of Fink and Speyer \cite{FinkSpeyer2012}, the invariant $g_M(t)$ was defined for
an arbitrary (loopless and coloopless) matroid $M$.
L\'opez de Medrano, Rinc\'on, and Shaw \cite{LRS2020} conjectured a Chow-theoretic formula for Speyer's $g$-polynomial,
which was later proved by Berget, Eur, Spink and Tseng \cite{BEST-2023}.
Ferroni \cite{Ferroni} provided a combinatorial way of computing $g_M(t)$ for an arbitrary Schubert matroid $M$.
Ferroni and Schr\"{o}ter \cite{Ferroni-Schroter-2023} proved that the coefficients of $g_M(t)$ are nonnegative for any sparse paving matroid $M$.

It is worth mentioning that the $g$-polynomial of uniform matroids has been computed by Speyer \cite{Speyer2009},
and this polynomial plays an important role in his proof of the $f$-vector conjecture in the case of a tropical linear
space realizable in characteristic $0$.
Let $U_{n,d}$ denote the uniform matroid of rank $d$ on $n$ elements. Speyer \cite{Speyer2009} obtained the following result.

\begin{proposition}[{\cite[Proposition 10.1]{Speyer2009}}]
For any $n\geq 2$ and $1\leq d\leq n-1$, the $g$-polynomial of the uniform matroid $U_{n,d}$ is given by
\begin{align}\label{g-uni-dn(t)}
 g_{U_{n,d}}(t)=\sum_{i=1}^{\min(d,n-d)} \frac{(n-i-1)!}{(d-i)!(n-d-i)!(i-1)!}\, t^i.
\end{align}
\end{proposition}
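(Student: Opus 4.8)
The plan is to derive the closed form \eqref{g-uni-dn(t)} from Speyer's definition of $g_M$ together with its structural properties, using that $U_{n,d}$ is the \emph{generic} matroid of its rank and cardinality: its matroid polytope is the full hypersimplex $\Delta_{d,n}$, and its tropical linear space is the completely generic one. My main line would be a recursion argument. Recall from \cite{Speyer2009,FinkSpeyer2012} that $g_M$ is a \emph{valuative} matroid invariant: for any subdivision of $P_M$ into smaller matroid polytopes, $g_M$ equals the alternating (inclusion--exclusion) sum of the $g$-polynomials of the cells. I would cut $\Delta_{d,n}$ by a hyperplane of the form $\sum_{i\in S}x_i=\ell$ to obtain a matroid subdivision whose two cells are polytopes of nested (Schubert) matroids assembled from strictly smaller uniform matroids; valuativity then produces a linear recurrence expressing $g_{U_{n,d}}(t)$ through $g_{U_{n',d'}}(t)$ with $n'<n$ (and through the $g$-polynomials of the auxiliary nested matroids, themselves handled by induction). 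With the base family $g_{U_{n,1}}(t)=g_{U_{n,n-1}}(t)=t$ and a few small cases computed directly from the definition, it then remains to substitute the right-hand side of \eqref{g-uni-dn(t)} into this recurrence and verify the resulting identity coefficientwise; the identities that come up are of Chu--Vandermonde type and are routine.

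There are two alternative routes that I would at least run as cross-checks. Since $U_{n,d}$ is a Schubert matroid, Ferroni's combinatorial rule \cite{Ferroni} for the $g$-polynomial of an arbitrary Schubert matroid applies; specializing it to the uniform case collapses its (generally nested) sum to a single sum over the exponent $i$, and elementary factorial manipulation turns that into \eqref{g-uni-dn(t)}. This is the shortest argument, but it leans on the later work \cite{Ferroni}. Alternatively, one can follow Speyer's original geometric approach: compute either the equivariant $K$-theoretic pushforward $\pi_{*}[\mathcal{O}_{\overline{T\cdot p_{U_{n,d}}}}]$ to a point by Atiyah--Bott localization over the $\binom{n}{d}$ torus-fixed $d$-planes with the standard weights, or the $f$-vector of the generic tropical linear space $L_{U_{n,d}}$, and repackage the output as a polynomial in $t$; the combinatorial heart of either computation is again a single hypergeometric identity.

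The main obstacle I anticipate is not the final algebra but the geometric step that feeds it: in the recursion approach one must choose the hyperplane split so that the recurrence genuinely closes, i.e.\ so that the nested matroids appearing on the cells have controllable $g$-polynomials; in the localization approach one must organize the fixed-point sum so that its large cancellations collapse \emph{exactly} to the binomial coefficients in \eqref{g-uni-dn(t)}. A secondary bookkeeping hazard is the behaviour at the ends of the range --- the degenerate factorials when $i=d$ or $i=n-d$, and the switch of the upper summation limit as $d$ crosses $n/2$ --- where the duality identity $g_{U_{n,d}}=g_{U_{n,n-d}}$ is the convenient consistency check.
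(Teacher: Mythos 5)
The paper does not prove this proposition at all: it is imported verbatim as \cite[Proposition 10.1]{Speyer2009}, and the closed form \eqref{g-uni-dn(t)} is used downstream as a black box. So there is no proof in the paper to compare against; the question is whether your proposal stands on its own, and as written it does not. All three of your routes are plausible pointers, but in each one the decisive step is deferred rather than carried out. In your main (valuative recursion) route you never specify the cut: for which $S$ and $\ell$ does $\Delta_{d,n}\cap\{\sum_{i\in S}x_i\le \ell\}$ actually induce a matroid subdivision into nested-matroid polytopes, what is the common wall (which also enters the inclusion--exclusion with its own $g$-polynomial), and what linear recurrence results? Without the explicit recurrence there is nothing to substitute \eqref{g-uni-dn(t)} into, so the claim that the verification is ``of Chu--Vandermonde type and routine'' cannot be checked. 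Moreover, the auxiliary nested matroids are to be ``handled by induction,'' but you give no formula for their $g$-polynomials; if you intend to supply one via Ferroni's rule, your first route is not independent of your second. The second and third routes are one-line gestures at \cite{Ferroni} and at Speyer's own localization computation (the latter being how Proposition 10.1 is in fact proved in \cite{Speyer2009}); invoking them without executing them amounts to citing the result, which is what the paper already does.

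A further concrete issue: the base case $g_{U_{n,1}}(t)=t$ is itself a theorem --- it follows from Speyer's result that series-parallel matroids have $g$-polynomial equal to $t$, since $U_{n,1}$ is an iterated parallel extension of a single element --- and does not drop out of the $K$-theoretic or valuative definition for free; it must be cited or proved. The endpoint and duality checks you mention are fine but are the easy part. To make this a proof you would need to fix one route, write down the exact subdivision or fixed-point sum, state the resulting recurrence or sum explicitly, and prove the hypergeometric identity matching it to \eqref{g-uni-dn(t)}. As it stands the proposal is a research plan rather than a proof.
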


The main objective of this paper is to study the analytic properties of $g_{U_{n,d}}(t)$. For notational convenience, let $g_{n,d}(t)=g_{U_{n,d}}(t)$ and $S_i(n,d)=[t^i]g_{n,d}(t)$, the coefficient of $t^i$ in $g_{n,d}(t)$. From \eqref{g-uni-dn(t)} it immediately follows that $g_{n,d}(t)= g_{n,n-d}(t)$. Thus, from now on, we may assume that $1\le d\le \lfloor n/2 \rfloor$, in which case we have $\deg g_{n,d}(t)=d$.

The remainder of this paper is organized as follows. In Section \ref{Sec:2} we first obtain a series of recurrence relations satisfied by Speyer's $g$-polynomials $g_{n,d}(t)$ by using the extended Zeilberger algorithm developed by Chen, Hou and Mu \cite{CHM-2012}.
In Section \ref{Sec:3} we prove that $g_{n,d}(t)$ has only real zeros for any $n\geq 2$ and $1\leq d\leq n-1$ by using a result due to Liu and Wang \cite{LiuWang}. Finally, in Section \ref{Sec:4}, we show that the coefficient of Speyer's $g$-polynomial $g_{n,[n/2]}(t)$ is asymptotically normal by local and central limit theorems based on a criterion given by Bender \cite{Bender} and Harper \cite{Harper}.

\section{Recurrence Relations}\label{Sec:2}

The aim of this section is to show some recurrence relations for Speyer's $g$-polynomial $g_{n,d}(t)$. These recurrences will play a key role in the proofs of the interlacing properties and the asymptotic normality presented in Sections \ref{Sec:3} and \ref{Sec:4} respectively.
On one hand, we fix $d\geq 1$ and consider the recurrence relation of the sequence $\{g_{n,d}(t)\}_{n\geq d+1}$.
We have the following result.
\begin{theorem}\label{thm:rec-g-poly}
For $d\ge 1$ and $n\ge d+2$, we have
\begin{align}
 g_{n,d}(t)
=&\, \frac{(2d-n+1)t+n-2}{n-d-1} g_{n-1,d}(t)
 +\frac{t(n-d-2)}{n-d-1} g_{n-2,d}(t), \label{rec:g-p-n1}\\[5pt]
 g_{n,d}(t)
=&\, \frac{dt+n-1}{n-d-1} g_{n-1,d}(t)
 -\frac{t(t+1)}{n-d-1} g_{n-1,d}'(t), \label{rec:g-p-n2}
\end{align}
with initial values $g_{d,d}(t)=0$, $g_{d+1,d}(t)=t$ and $g_{d+2,d}(t)=dt+(d-1)t^2$.
\end{theorem}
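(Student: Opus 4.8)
The plan is to verify everything directly from the explicit formula \eqref{g-uni-dn(t)} by comparing coefficients of $t^i$ on both sides of each identity. Write $S_i(n,d) = [t^i]g_{n,d}(t) = \dfrac{(n-i-1)!}{(d-i)!(n-d-i)!(i-1)!}$, with the convention that $S_i(n,d)=0$ when $i$ falls outside the range $1\le i\le \min(d,n-d)$ (so that the factorials in the denominator are of negative arguments, or equivalently the binomial coefficients vanish). The initial values are immediate: $g_{d,d}(t)=0$ since $\min(d,0)=0$ leaves an empty sum; $g_{d+1,d}(t)=S_1(d+1,d)\,t = \frac{(d-1)!}{0!\,0!\,0!}\,t = t$; and $g_{d+2,d}(t) = S_1(d+2,d)\,t + S_2(d+2,d)\,t^2$ with $S_1(d+2,d)=\frac{d!}{1!\,1!\,0!}=d$ and $S_2(d+2,d)=\frac{(d-1)!}{(d-2)!\,0!\,1!}=d-1$, giving $dt+(d-1)t^2$.

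For \eqref{rec:g-p-n1}, the claim reduces, after extracting $[t^i]$, to the three-term identity
\begin{align}
(n-d-1)\,S_i(n,d) = (2d-n+1)\,S_{i-1}(n-1,d) + (n-2)\,S_i(n-1,d) + (n-d-2)\,S_{i-1}(n-2,d).\nonumber
\end{align}
Here I would substitute the factorial expression for each $S$-term and pull out the common factor $\dfrac{(n-i-2)!}{(d-i)!(n-d-i-1)!(i-1)!}$ (being careful near the boundary $i=d$ or $i=n-d$, where one or more terms drop out and the identity must be checked separately). After this cancellation the identity becomes a polynomial identity in $n,d,i$ of low degree, which is verified by elementary algebra. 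The same strategy applies to \eqref{rec:g-p-n2}: since $[t^i]\,g_{n-1,d}'(t) = (i+1)S_{i+1}(n-1,d)$ and $[t^i]\,t\,g_{n-1,d}'(t) = i\,S_i(n-1,d)$, extracting $[t^i]$ turns \eqref{rec:g-p-n2} into
\begin{align}
(n-d-1)\,S_i(n,d) = d\,S_{i-1}(n-1,d) + (n-1)\,S_i(n-1,d) - i\,S_i(n-1,d) - i\,S_{i-1}(n-1,d),\nonumber
\end{align}
again a factorial identity that collapses to a low-degree polynomial identity after cancelling a common factor.

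The genuinely mechanical heart of the argument — and the point the authors attribute to the extended Zeilberger algorithm of Chen, Hou and Mu — is exactly the production of these certificate identities; once the recurrences are written down, confirming them is routine. So the main obstacle is not conceptual but bookkeeping: one must handle the boundary cases (small $n$ relative to $d$, and the edge coefficients $i\in\{1,d,n-d\}$) with care, since the "vanishing by convention" of out-of-range $S_i$ terms has to be consistent with the factorial formula having poles at negative integer arguments. A clean way to sidestep this is to rewrite $S_i(n,d) = \dfrac{1}{i}\binom{n-i-1}{d-i}\binom{n-d-1}{i-1}$ in binomial form, where all out-of-range values are genuinely $0$, and then verify the two displayed coefficient identities using standard Pascal-type binomial recurrences; the initial-value check is then a one-line evaluation as above. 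An alternative, if one prefers to avoid case analysis entirely, is to invoke the WZ-style guarantee: both sides of \eqref{rec:g-p-n1} and \eqref{rec:g-p-n2} are rational-function combinations of the hypergeometric-type summand, so equality of the summand-level certificates (which the extended Zeilberger algorithm outputs) implies equality of the sums, and only the base cases $g_{d+1,d}$ and $g_{d+2,d}$ remain to be checked by hand.
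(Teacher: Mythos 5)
Your route is genuinely different from the paper's. The authors do not verify the coefficient identities by hand: they feed the three hypergeometric terms $f_1=S_k(n,d)t^k$, $f_2=S_k(n-1,d)t^k$, $f_3=S_k(n-2,d)t^k$ to the function {\tt Ext\_Zeil} of the Maple package {\tt APCI} (the extended Zeilberger algorithm of Chen--Hou--Mu), read off the $k$-free coefficients $a_1,a_2,a_3$ from the output, and obtain \eqref{rec:g-p-n1} as the homogeneous relation \eqref{relation-desired}; the other recurrences are treated the same way and the initial values are checked from \eqref{g-uni-dn(t)}. Your direct coefficient comparison is more elementary and self-contained: since the prefactors in \eqref{rec:g-p-n1} and \eqref{rec:g-p-n2} are polynomials in $t$, the recurrences are equivalent to the term-by-term identities you display, and (as you note) after cancelling the common factor $\frac{(n-i-2)!}{(d-i+1)!\,(n-d-i)!\,(i-1)!}$ the first one collapses to the polynomial identity
$(n-d-1)(n-i-1)(d-i+1)=(2d-n+1)(n-i-1)(i-1)+(n-2)(d-i+1)(n-d-i)+(n-d-2)(i-1)(n-d-i)$,
which is identically true. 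What the paper's method buys is automation and uniformity across the many recurrences in Sections 2; what yours buys is a human-checkable certificate with no reliance on a software package.

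Two computational slips need fixing before your verification goes through literally. First, in your displayed identity for \eqref{rec:g-p-n2} the contribution of $-t^2g_{n-1,d}'(t)$ to $[t^i]$ is $-(i-1)S_{i-1}(n-1,d)$, not $-i\,S_{i-1}(n-1,d)$; with your coefficient the identity fails already at $n=5$, $d=2$, $i=2$. Second, the binomial rewriting should read $S_i(n,d)=\binom{n-i-1}{d-i}\binom{n-d-1}{i-1}$ with no $\frac1i$ prefactor (check $S_2(5,2)=2=\binom{2}{0}\binom{2}{1}$); with that correction it does make all out-of-range terms genuinely zero and disposes of the boundary cases as you intend. With these repairs the argument is complete.
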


On the other hand, we fix $n\geq 3$ and consider the recurrence relation of the sequence $\{g_{n,d}(t)\}_{1\leq d\leq n-1}$.
We have the following result.
\begin{theorem}\label{thm:rec-g-poly2}
For any $n\geq 3$ and $1\leq d\leq n-1$, we have
\begin{align}
 g_{n,d}(t)
=&\, \frac{c_d(n+1)\big(c_d(n)c_d(n+2)t+(n-d)^2+(d-2)^2+n-2\big)}
{(d-1)(n-d)c_d(n+2)} g_{n,d-1}(t)\label{rec:g-p-d1}\\
&\quad
 -\frac{(d-2)(n+1-d)c_d(n)}{(d-1)(n-d)c_d(n+2)} g_{n,d-2}(t),
 \qquad \mbox{ for } 2\le d\le [n/2]+1 \mbox{ and } n\ge 3, \nonumber\\[5pt]
 g_{n,d}(t)
=&\, \frac{(n+1-2d)(n+1-d)t+(n-d)^2+n-2d+1}{(d-1)(n-d)} g_{n,d-1}(t)\label{rec:g-p-d2}\\
&\quad
 -\frac{t(t+1)(n+1-2d)}{(d-1)(n-d)} g_{n,d-1}'(t),
 \qquad \mbox{ for } 2\le d\le n-1 \mbox{ and } n\ge 3, \nonumber
\end{align}
where $c_d(n)=n+1-2d$, $g_{n, 0}(t)=0$, $g_{n, 1}(t)=t$ and $g_{n, 2}(t)=(n-2)t+(n-3)t^2$ for $n\ge 3$.
\end{theorem}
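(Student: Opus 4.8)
The plan is to pass to the coefficients and reduce both identities to elementary hypergeometric manipulations. Write $g_{n,d}(t)=\sum_{i\ge 1}S_i(n,d)\,t^i$, where by \eqref{g-uni-dn(t)}
\[
  S_i(n,d)=\frac{(n-i-1)!}{(d-i)!\,(n-d-i)!\,(i-1)!},
\]
with the convention $1/m!=0$ for integers $m<0$, so that $S_i(n,d)$ vanishes precisely when $i\notin\{1,\dots,\min(d,n-d)\}$. Each $t$-dependent prefactor appearing in \eqref{rec:g-p-d1} and \eqref{rec:g-p-d2} is affine in $t$, and the only other ingredient is $t(t+1)g_{n,d-1}'(t)$, whose coefficient of $t^i$ is $(i-1)S_{i-1}(n,d-1)+i\,S_i(n,d-1)$. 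Consequently, extracting $[t^i]$ from both sides turns \eqref{rec:g-p-d1} (respectively \eqref{rec:g-p-d2}) into a single closed relation among the four quantities $S_i(n,d)$, $S_i(n,d-1)$, $S_{i-1}(n,d-1)$ and $S_i(n,d-2)$, with coefficients that are polynomials in $i,n,d$ (and $c_d(n)=n+1-2d$).

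Next I would use that these four quantities are linked by the cross-multiplied proportionalities
\[
  S_i(n,d)\,(d-i)=S_i(n,d-1)\,(n-d+1-i),\qquad
  S_i(n,d-2)\,(n-d-i+2)=S_i(n,d-1)\,(d-1-i),
\]
\[
  S_{i-1}(n,d-1)\,(d-i)(n-d-i+2)=S_i(n,d-1)\,(n-i)(i-1),
\]
all of which follow at once from the factorial formula and hold for every integer $i$. Multiplying the $[t^i]$-relation through by $(d-i)(n-d-i+2)$ together with the denominator of the recurrence under consideration, and using these proportionalities, makes every term a polynomial (in $i,n,d$) multiple of $S_i(n,d-1)$; cancelling it reduces \eqref{rec:g-p-d1} and \eqref{rec:g-p-d2} to honest polynomial identities in $(i,n,d)$ --- quadratic or cubic in $i$ and of bounded degree in $n,d$ --- which are then verified by expansion. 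At the handful of indices where the factor $(d-i)(n-d-i+2)$ vanishes one checks the $[t^i]$-relation directly; these are the boundary cases $i\in\{d,\,n-d+2\}$, where several of the $S$-values are zero and the remaining relation is read off from the same polynomial identity specialized at that value of $i$. Running the extended Zeilberger algorithm of Chen, Hou and Mu \cite{CHM-2012} on the hypergeometric summand $S_i(n,d)\,t^i$ produces exactly this certificate automatically, which is the route I would actually take in practice.

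It then remains to record the base data and to justify the stated ranges. The values $g_{n,0}(t)=0$, $g_{n,1}(t)=t$ and $g_{n,2}(t)=(n-2)t+(n-3)t^2$ are obtained by evaluating \eqref{g-uni-dn(t)} directly --- the $d=2$ expression being valid for all $n\ge 3$, since at $n=3$ both sides equal $t$ --- and these seed the recursions in $d$. The hypothesis $2\le d\le [n/2]+1$ in \eqref{rec:g-p-d1} is needed only to keep the denominator $c_d(n+2)=n+3-2d$ from vanishing (indeed $n+3-2d\ge 1$ on that range), and, together with the palindromy $g_{n,d}(t)=g_{n,n-d}(t)$ visible in \eqref{g-uni-dn(t)}, this already determines every $g_{n,d}(t)$ with $1\le d\le n-1$; the coefficients in \eqref{rec:g-p-d2} carry no such degeneracy, which is why that recurrence holds on the full range $2\le d\le n-1$. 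I expect the only real difficulty to be the multivariable bookkeeping --- keeping straight which $S$-values vanish, and the boundary indices noted above --- rather than anything conceptual, which is precisely why delegating the verification to creative telescoping is the cleanest option. As an alternative one may observe that \eqref{g-uni-dn(t)} is equivalent to $g_{n,d}(t)=\binom{n-2}{d-1}\,t\;{}_2F_1\!\big(1-d,\,1-n+d;\,2-n;\,-t\big)$, so \eqref{rec:g-p-d1} and \eqref{rec:g-p-d2} are normalized instances of Gauss's three-term contiguous relations for ${}_2F_1$; but extracting the precise forms above from the classical relations is no shorter than the direct check.
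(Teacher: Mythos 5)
Your proposal is correct, and at bottom it is the same certificate-style verification the paper itself uses: the paper proves \eqref{rec:g-p-d1} and \eqref{rec:g-p-d2} by feeding the hypergeometric summands $S_i(n,d)t^i$, $S_i(n,d-1)t^i$, $S_i(n,d-2)t^i$ (and, for \eqref{rec:g-p-d2}, the derivative term) to the extended Zeilberger algorithm ({\tt Ext\_Zeil} in Hou's {\tt APCI} package) and reading off the $i$-free coefficients, exactly as it does in detail for \eqref{rec:g-p-n1}; the proofs of the two recurrences in this theorem are explicitly declared ``similar'' and omitted. What you add is the by-hand version of that certificate: extracting $[t^i]$, invoking the three contiguous ratios of $S$-values (all three check out against the factorial formula, with the convention $1/m!=0$ for $m<0$), and reducing each recurrence to a polynomial identity in $(i,n,d)$ together with the boundary indices $i\in\{d,\,n-d+2\}$ where the clearing factor vanishes. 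This makes the argument verifiable without software, which the paper's write-up is not. Your reading of the restriction $2\le d\le [n/2]+1$ in \eqref{rec:g-p-d1} as protecting the denominator $c_d(n+2)=n+3-2d$ (which is indeed $\ge 1$ on that range) is sound, and the reformulation $g_{n,d}(t)=\binom{n-2}{d-1}t\,{}_2F_1(1-d,\,1-n+d;\,2-n;\,-t)$ is correct, so the two recurrences are, as you say, normalized Gauss contiguous relations. No gaps; the only caveat is that the ``polynomial identity in $(i,n,d)$'' must actually be expanded (or delegated to the algorithm) for the proof to be complete, which you acknowledge.
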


We shall prove Theorems \ref{thm:rec-g-poly} and \ref{thm:rec-g-poly2} by using the extended Zeilberger algorithm developed by Chen, Hou and Mu \cite{CHM-2012} on the basis of Zeilberger algorithm \cite{Zeilberger1991, WZ1992}.
For  a nice survey on the method of creative telescoping and related open problems, see Chen and Kauers \cite{ChenKauers}.
Before proving Theorems \ref{thm:rec-g-poly} and \ref{thm:rec-g-poly2}, let us have a brief overview of the extended Zeilberger algorithm.
Suppose that $f_{1}(k,\,p_1,\,p_2,\,\ldots,\,p_{s}),\,\ldots,\, f_{m}(k,\,p_1,\,p_2,\,\ldots,\,p_{s})$ are $m$ hypergeometric terms of $k$ with parameters $p_1,\,p_2$, $\ldots,\,p_{s}$ such that both
$$
\frac{f_{i}(k,\,p_1,\,p_2,\,\ldots,\,p_{s})}{f_{j}(k,\,p_1,\,p_2,\,\ldots,\,p_{s})}
\quad
{\rm and}
\quad
\frac{f_{i}(k+1,\,p_1,\,p_2,\,\ldots,\,p_{s})}{f_{i}(k,\,p_1,\,p_2,\,\ldots,\,p_{s})}
$$
are all rational functions of $k$ and $p_1,\,p_2,\,\ldots,\,p_{s}$ for any $1\le i,j\le m$. This algorithm is devised to find a hypergeometric term $h(k,\,p_1,\,p_2,\,\ldots,\,p_{s})$
and polynomial coefficients $a_{i}(p_1,\,p_2,\,\ldots,\,p_{s})$ for $1\le i\le m$ which are independent of $k$ such that
\begin{align}\label{eq-telescope}
a_{1}f_1(k)+a_2 f_2(k)+\cdots+ a_{m}f_{m}(k)=u(k+1)-u(k),
\end{align}
where we omit the parameters $p_1,\,p_2,\,\ldots,\,p_{s}$ for brevity.
If we let
$F_i=\sum_k f_i(k)$
for $1\leq i\leq m$ and sum \eqref{eq-telescope} over $k$, then we get a homogeneous relation
\begin{align}\label{relation-desired}
a_1 F_1 +a_2 F_2 +\cdots+a_{m} F_{m}=0.
\end{align}

Hou \cite{APCI} implemented the extended Zeilberger algorithm as the function {\tt Ext\_Zeil} in the Maple package {\tt APCI}.
The calling sequence of this function  is of the form {\tt Ext\_Zeil([$f_1,\, f_2,\,\ldots,f_{m}$]{\rm ,} $k$)}, and the output is of the form $[C,Ca_2/a_1,Ca_3/a_1,\ldots,Ca_{m}/a_1]$, where $C$ is a $k$-free non-zero constant, provided that the algorithm is applicable.

We are now in a position to give a symbolic proof of Theorems \ref{thm:rec-g-poly} and \ref{thm:rec-g-poly2}.

\begin{proof}[Proof of Theorems \ref{thm:rec-g-poly} and \ref{thm:rec-g-poly2}]
Let us first show the recursion \eqref{rec:g-p-n1}. To employ the package {\tt APCI}, we first import it in Maple as follows.

\noindent
{\bf\small
 $[>$ with(APCI)};
\vskip -18pt
\begin{align*}
       &\mbox{[{\it AbelZ, Ext\_Zeil, Gosper, MZeil, Zeil, hyper\_simp, hyperterm, poch, qExt\_Zeil},}\\
         &\qquad \mbox{{\it qGosper, qZeil, qbino, qhyper\_simp, qhyperterm, qpoch}]}
\end{align*}
Observe that \eqref{rec:g-p-n1} is of the form \eqref{relation-desired} with
$$
f_1=S_k(n,d) t^k, \quad f_2=S_k(n-1,d) t^k,\quad f_3=S_k(n-2,d) t^k,
$$
where $S_k(n,d)$ is as defined in Section \ref{sec:1}.
In order to prove \eqref{rec:g-p-n1}, we continue the following set of $f_i$:

\noindent
{\bf\small
$[>$ {\mathversion{bold} \hskip -2pt
$f_1 := \displaystyle{\frac{(n-k-1)!}{(d-k)!(n-d-k)!(k-1)!}} t^k$:}}\\

\noindent
{\bf\small
$[>$ {\mathversion{bold} \hskip -2pt
$f_2 := \displaystyle{\frac{(n-k-2)!}{(d-k)!(n-d-k-1)!(k-1)!}} t^k$:}}\\

\noindent
{\bf\small
$[>$ {\mathversion{bold} \hskip -2pt
$f_3 := \displaystyle{\frac{(n-k-3)!}{(d-k)!(n-d-k-2)!(k-1)!}} t^k$:}}\\

Then we run the command of the main function:

\noindent
{\bf\small
$[>$ {\bf\small Ext}$\_${\bf\small Zeil}\hskip -0.1pt {\mathversion{bold}$([f_1,f_2,f_3],k)$;}}\\
\begin{align*}
 \!\! \left[k\_free_1,\ \frac{k\_free_1 (2dt-nt+n+t-2)}{d-n+1},\
  -\frac{t(d-n+2) k\_free_1}{d-n+1} \right].
\end{align*}

The above output implies that there is some nonzero constant $C$ free of $k$ such that
$$
C \cdot g_{n,d}(t) +C\cdot \frac{2dt-nt+n+t-2}{d-n+1}\, g_{n-1,d}(t) + C\cdot \frac{-t(d-n+2)}{d-n+1}\, g_{n-2,d}(t)=0,
$$
which leads to the recursion \eqref{rec:g-p-n1}.
By \eqref{g-uni-dn(t)}, it is easy to check that $g_{d+1,d}(t)=t$ and $g_{d+2,d}(t)=dt+(d-1)t^2$ for $d\ge 1$. Thus, \eqref{rec:g-p-n1} is proved.

The detailed proofs of recurrence relations \eqref{rec:g-p-n2}, \eqref{rec:g-p-d1} and \eqref{rec:g-p-d2} are similar to that of \eqref{rec:g-p-n1}, and hence are omitted here.
\end{proof}

We further obtain the following two triangular recurrence relations.
\begin{theorem}\label{thm:2.2}
For $n\ge 3$ and $2\le d \le n-1$, we have
\begin{align}
 g_{n,d}(t)
=&\, \frac{n-d}{d-1} g_{n,d-1}(t)
 +\frac{t(n+1-2d)}{d-1} g_{n-1,d-1}(t), \label{rec:g-p-nd1}\\
 g_{n,d}(t)
=&\, \frac{(d-1)(n-d)}{(n-2d)(n+1-2d)t+(d-1)^2} g_{n,d-1}(t)\label{rec:g-p-nd2}\\
 &\quad
  +\frac{(n+1-2d)(n-1-d)t}{(n-2d)(n+1-2d)t+(d-1)^2} g_{n-1,d}(t).\nonumber
\end{align}
\end{theorem}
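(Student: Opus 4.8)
The plan is to prove \eqref{rec:g-p-nd1} and \eqref{rec:g-p-nd2} in the same manner as \eqref{rec:g-p-n1} was proved in Theorem~\ref{thm:rec-g-poly}: supply the extended Zeilberger algorithm ({\tt Ext\_Zeil} from {\tt APCI}) with the appropriate triple of hypergeometric summands and read off its output. Writing $S_k(n,d)=\dfrac{(n-k-1)!}{(d-k)!(n-d-k)!(k-1)!}$ as in Section~\ref{sec:1}, relation \eqref{rec:g-p-nd1} is of the form \eqref{relation-desired} with
$$
f_1=S_k(n,d)\,t^k,\qquad f_2=S_k(n,d-1)\,t^k,\qquad f_3=S_k(n-1,d-1)\,t^k,
$$
and \eqref{rec:g-p-nd2} is of the form \eqref{relation-desired} with
$$
f_1=S_k(n,d)\,t^k,\qquad f_2=S_k(n,d-1)\,t^k,\qquad f_3=S_k(n-1,d)\,t^k.
$$
In each case the ratios $f_i/f_j$ and $f_i(k+1)/f_i(k)$ are rational in $k$ and in $n,d,t$, so the algorithm applies; running {\tt Ext\_Zeil}$([f_1,f_2,f_3],k)$ returns a list $[C,\,Ca_2/a_1,\,Ca_3/a_1]$ with $C$ a nonzero $k$-free constant. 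Summing the telescoping identity \eqref{eq-telescope} over $k$ — the summand has finite support in $k$ because of the factorials, so all boundary terms vanish — yields the homogeneous relation $g_{n,d}(t)+(a_2/a_1)g_{n,d-1}(t)+(a_3/a_1)g_{n-1,d-1}(t)=0$ (respectively with $g_{n-1,d}(t)$ in the last slot), which is precisely \eqref{rec:g-p-nd1} (respectively \eqref{rec:g-p-nd2}).

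One point deserves emphasis. The coefficients appearing in \eqref{rec:g-p-nd2} are genuine rational functions of $t$, whereas those in \eqref{rec:g-p-nd1} are polynomials; this is consistent with {\tt Ext\_Zeil}, whose output entries $Ca_2/a_1$ and $Ca_3/a_1$ here share the common denominator $(n-2d)(n+1-2d)t+(d-1)^2$, which one keeps in the final recurrence rather than clearing. Since for $d\ge 2$ this denominator is not the zero polynomial, and since \eqref{rec:g-p-nd1} and the cleared form of \eqref{rec:g-p-nd2} are polynomial identities in $t$ with coefficients polynomial in $n$ and $d$, they hold throughout the stated ranges $n\ge 3$, $2\le d\le n-1$; the finitely many specializations at which intermediate factors such as $n+1-2d$ vanish (e.g.\ $n=2d-1$) cause no difficulty, being covered either by polynomial continuity or by a direct check from \eqref{g-uni-dn(t)} together with $g_{n,0}(t)=0$, $g_{n,1}(t)=t$, $g_{n,2}(t)=(n-2)t+(n-3)t^2$.

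An alternative is a purely algebraic derivation from Theorems~\ref{thm:rec-g-poly} and~\ref{thm:rec-g-poly2} that eliminates the derivative terms. Equating the two expressions for $g_{n,d}(t)$ in \eqref{rec:g-p-n1} and \eqref{rec:g-p-n2}, simplifying, and shifting $n\mapsto n+1$ produces the auxiliary relation
$$
t(n-d-1)\,g_{n-1,d}(t)=\big((n-d)t+1\big)g_{n,d}(t)-t(t+1)\,g_{n,d}'(t);
$$
substituting this, with $d$ replaced by $d-1$, into \eqref{rec:g-p-d2} makes the $g_{n,d-1}'(t)$ terms cancel and leaves exactly \eqref{rec:g-p-nd1}. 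Then, deriving in the same spirit (or again from {\tt Ext\_Zeil}) the companion three-term relation $(n-2d)g_{n,d}(t)=(n-1-d)g_{n-1,d}(t)-(d-1)g_{n-1,d-1}(t)$, and using \eqref{rec:g-p-nd1} to eliminate $g_{n,d-1}(t)$ between the two, one obtains \eqref{rec:g-p-nd2}. The delicate part of this route is bookkeeping: one must track the many linear factors $n+1-2d$, $n-2d$, $n-d$, $d-1$ so that every cancellation is exact, and keep the two occurrences of the first derivative aligned so that no second derivative ever appears — which the chosen elimination order is arranged to ensure.

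I do not expect a genuine conceptual obstacle: both routes are mechanical. The practical nuisances are reading off and simplifying the {\tt Ext\_Zeil} output so that it matches verbatim the rational-in-$t$ coefficients of \eqref{rec:g-p-nd2}, and disposing of the boundary values of $(n,d)$, both handled as above. For consistency with the rest of Section~\ref{Sec:2} I would present the Zeilberger proof in the main text and relegate the algebraic derivation to a remark.
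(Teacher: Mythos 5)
Your primary route is exactly the paper's: the authors prove Theorem \ref{thm:2.2} by running the extended Zeilberger algorithm on the corresponding triples of hypergeometric summands, just as in the proof of \eqref{rec:g-p-n1}, and your choices $f_1=S_k(n,d)t^k$, $f_2=S_k(n,d-1)t^k$, $f_3=S_k(n-1,d-1)t^k$ (resp.\ $f_3=S_k(n-1,d)t^k$) are the correct inputs, with the rationality of the ratios $f_i/f_j$ and $f_i(k+1)/f_i(k)$ easily verified. The additional algebraic derivation from Theorems \ref{thm:rec-g-poly} and \ref{thm:rec-g-poly2} is a reasonable alternative but is not needed and is not what the paper does.
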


\begin{proof}
By applying the extended Zeilberger algorithm, one can obtain the recursions \eqref{rec:g-p-nd1} and \eqref{rec:g-p-nd2} in a similar argument stated in the proof of Theorems \ref{thm:rec-g-poly} and \ref{thm:rec-g-poly2}.
\end{proof}

We also find the following interesting recurrence relations for Speyer's $g$-polynomials, and omit the proofs. As will be seen, the recursions provided by Theorem \ref{thm:gpn=2d} will lead to some new interlacing properties for Speyer's $g$-polynomials showed in Theorem \ref{thm:intlc-n=2d}.

\begin{theorem}\label{thm:gpn=2d}
For $d\ge 1$, Speyer's $g$-polynomial satisfies the following recurrence relations.
\begin{align}
 g_{2d+2,d+1}(t)
=&\, \frac{(t+2)(2d-1)}{d} g_{2d,d}(t)
  -\frac{(d-1)t^2}{d} g_{2d-2,d-1}(t), \label{rec:g-p-n2d} \\
 g_{2d+3,d+1}(t)
=&\, \frac{2(2d^2t+4d^2-1)}{(d+1)(2d-1)} g_{2d+1,d}(t)
  -\frac{(2d+1)(d-1)t^2}{(d+1)(2d-1)} g_{2d-1,d-1}(t), \label{rec:g-p-n2do}\\
 g_{2d+1,d}(t)
=&\, \frac{2d-1}{d} g_{2d,d}(t) + \frac{(d-1)t}{d} g_{2d-1,d-1}(t),
  \label{rec:g-p-n2d+1}\\
 g_{2d+2,d+1}(t)
=&\, 2\, g_{2d+1,d}(t) + t\, g_{2d,d}(t), \label{rec:g-p-n2d-1}
\end{align}
with the convention that $g_{0,0}(t)=g_{1,0}(t)=0$.
\end{theorem}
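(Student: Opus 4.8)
The plan is to derive all four identities of Theorem~\ref{thm:gpn=2d} by purely algebraic elimination from the two recurrences already available in this section: the fixed‑$d$ three‑term recurrence \eqref{rec:g-p-n1} of Theorem~\ref{thm:rec-g-poly} and the triangular recurrence \eqref{rec:g-p-nd1} of Theorem~\ref{thm:2.2}. All manipulations take place in the field $\mathbb{Q}(t)$ of rational functions in $t$, so the derivations are elementary. Two alternatives are worth noting: one could instead substitute the closed form \eqref{g-uni-dn(t)} and compare the coefficients of $t^i$ on the two sides, which turns each identity into an elementary identity of products of factorials in $d$; or one could feed the relevant hypergeometric summands $S_k(n,d)\,t^k$ into the extended Zeilberger algorithm exactly as in the proofs of Theorems~\ref{thm:rec-g-poly} and \ref{thm:rec-g-poly2}. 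I prefer the elimination route because it requires no machine computation and uses only results already proved above.

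I would establish the four relations in the order \eqref{rec:g-p-n2d-1}, \eqref{rec:g-p-n2d+1}, \eqref{rec:g-p-n2d}, \eqref{rec:g-p-n2do}. For \eqref{rec:g-p-n2d-1}: a suitable specialization of \eqref{rec:g-p-nd1} expresses $g_{2d+2,d+1}(t)$ through $g_{2d+2,d}(t)$ and $g_{2d+1,d}(t)$, while \eqref{rec:g-p-n1} taken at $n=2d+2$ expresses $g_{2d+2,d}(t)$ through $g_{2d+1,d}(t)$ and $g_{2d,d}(t)$; substituting the second into the first and simplifying collapses the coefficients to $g_{2d+2,d+1}(t)=2\,g_{2d+1,d}(t)+t\,g_{2d,d}(t)$. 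For \eqref{rec:g-p-n2d+1}: \eqref{rec:g-p-nd1} taken at indices $(2d+1,d)$ writes $g_{2d+1,d}(t)$ as a combination of the two auxiliary polynomials $g_{2d+1,d-1}(t)$ and $g_{2d,d-1}(t)$; then \eqref{rec:g-p-nd1} at $(2d,d)$ (solved for $g_{2d,d-1}(t)$) together with \eqref{rec:g-p-n1} at $(2d+1,d-1)$ express each of these two auxiliary polynomials in terms of $g_{2d,d}(t)$ and $g_{2d-1,d-1}(t)$; after substitution the two contributions of $g_{2d-1,d-1}(t)$ merge and its coefficient collapses to $(d-1)t/d$, which is \eqref{rec:g-p-n2d+1}. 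Next, \eqref{rec:g-p-n2d} follows by substituting \eqref{rec:g-p-n2d+1} into \eqref{rec:g-p-n2d-1} and then eliminating the leftover $g_{2d-1,d-1}(t)$ by means of the instance of \eqref{rec:g-p-n2d-1} with $d$ replaced by $d-1$, namely $g_{2d,d}(t)=2\,g_{2d-1,d-1}(t)+t\,g_{2d-2,d-1}(t)$. Finally, \eqref{rec:g-p-n2do} follows by substituting \eqref{rec:g-p-n2d-1} into the instance of \eqref{rec:g-p-n2d+1} with $d$ replaced by $d+1$, and then removing $g_{2d,d}(t)$ with the help of \eqref{rec:g-p-n2d+1} solved for $g_{2d,d}(t)$. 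In each case the verification reduces to checking that two rational functions of $t$ and $d$ coincide after clearing the denominators $d$, $d+1$, $2d-1$.

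The one delicate point — and essentially the only obstacle — is to keep track of the hypotheses of the recurrences being invoked and of the degenerate base cases. Relation \eqref{rec:g-p-n1} is stated for $d\ge 1$, $n\ge d+2$ and \eqref{rec:g-p-nd1} for $2\le d\le n-1$, so after the index shifts above the computation is legitimate only once $d\ge 2$ (some of the specializations used contain a lower index that must be at least $1$). The finitely many remaining cases — $d=1$, and $d=2$ if one wishes to be safe — are then verified directly from \eqref{g-uni-dn(t)}; for instance with $d=1$ one has $g_{2,1}(t)=g_{3,1}(t)=t$, $g_{4,2}(t)=2t+t^2$ and $g_{5,2}(t)=3t+2t^2$, and each of the four relations holds on inspection. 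Note also that in \eqref{rec:g-p-n2d}, \eqref{rec:g-p-n2do} and \eqref{rec:g-p-n2d+1} the ``boundary'' polynomials $g_{2d-2,d-1}(t)$ and $g_{2d-1,d-1}(t)$ always occur multiplied by the factor $d-1$, so the stated convention $g_{0,0}(t)=g_{1,0}(t)=0$ is harmless at $d=1$. Once these routine checks are carried out, Theorem~\ref{thm:gpn=2d} follows.
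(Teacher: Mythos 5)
Your proposal is correct, and it takes a genuinely different route from the paper: the paper states Theorem~\ref{thm:gpn=2d} with the proof omitted, the implicit method being the extended Zeilberger algorithm applied to the hypergeometric summands $S_k(n,d)t^k$, exactly as in the proofs of Theorems~\ref{thm:rec-g-poly}--\ref{thm:2.2}. You instead derive all four identities by rational-function elimination from the already-established recurrences \eqref{rec:g-p-n1} and \eqref{rec:g-p-nd1}. I checked the eliminations: for \eqref{rec:g-p-n2d-1}, combining \eqref{rec:g-p-nd1} at $(2d+2,d+1)$ with \eqref{rec:g-p-n1} at $n=2d+2$ indeed collapses to $2\,g_{2d+1,d}+t\,g_{2d,d}$; for \eqref{rec:g-p-n2d+1}, the three substitutions you describe produce the coefficient $t\bigl(\tfrac{d}{d-1}-\tfrac{2d-1}{d(d-1)}\bigr)=\tfrac{(d-1)t}{d}$ as claimed; and the remaining two identities follow by the stated back-substitutions, with the coefficient of $g_{2d+1,d}$ in \eqref{rec:g-p-n2do} simplifying to $\tfrac{2(2d^2t+4d^2-1)}{(d+1)(2d-1)}$ after clearing $2d-1$. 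Your bookkeeping of the hypotheses ($d\ge 2$ for the steps that invoke a rank parameter $d-1$, with $d=1$ checked directly from \eqref{g-uni-dn(t)} and rendered harmless by the factor $d-1$ in front of $g_{2d-2,d-1}$ and $g_{2d-1,d-1}$) is also right. What your approach buys is a proof requiring no further machine computation and exhibiting the logical dependence of Theorem~\ref{thm:gpn=2d} on the earlier recurrences; the one caveat is that it still rests on \eqref{rec:g-p-nd1} of Theorem~\ref{thm:2.2}, which the paper itself only justifies via the extended Zeilberger algorithm, so the overall reliance on creative telescoping is reduced but not eliminated. A direct Zeilberger run on the four identities (the paper's implicit route) or a coefficientwise comparison via \eqref{g-uni-dn(t)}, which you mention as alternatives, would make the theorem independent of Theorem~\ref{thm:2.2}.
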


To end this section, we show four more recurrence relations for Speyer's $g$-polynomial. The recurrences given by Theorem \ref{thm:rec-f'f''} together with \eqref{rec:g-p-n2d+1} and \eqref{rec:g-p-n2d-1} will be employed in the proof of the asymptotic normality stated in Section \ref{Sec:4}.

\begin{theorem}\label{thm:rec-f'f''}
For any given integer $d\ge 2$, we have
\begin{align}
 g_{2d,d}(t)
=&\, \frac{(d-1)t}{dt+1}\, g_{2d-1,d-1}(t)
 +\frac{t(t+1)}{dt+1}\, g_{2d,d}'(t), \label{rec:g-p-n2d-1'}\\
 g_{2d,d}(t)
=&\, \frac{(2d-1)t+2d}{dt+d+1}\, g_{2d-1,d-1}(t)
  +\frac{t(t+1)^2}{(d-1)(dt+d+1)}\, g_{2d,d}''(t),\label{rec:g-p-n2d-1''}
\end{align}
and for any given integer $d\ge 1$, we have
\begin{align}
 g_{2d+1,d}(t)
=&\, \frac{dt}{(d+1)t+1}\, g_{2d,d}(t)
 +\frac{t(t+1)}{(d+1)t+1}\, g_{2d+1,d}'(t), \label{rec:g-p-n2d+1'}\\
 g_{2d+1,d}(t)
=&\, \frac{2dt+2d+1}{(d+1)t+d+2}\, g_{2d,d}(t)
  +\frac{t(t+1)^2}{d(dt+t+d+2)}\, g_{2d+1,d}''(t).\label{rec:g-p-n2d+1''}
\end{align}
\end{theorem}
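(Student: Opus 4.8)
The plan is to prove Theorem~\ref{thm:rec-f'f''} using the same machinery as the earlier recurrences in this section, namely the extended Zeilberger algorithm of Chen, Hou and Mu as implemented in Hou's {\tt APCI} package. The new feature here is that the right-hand sides involve the derivatives $g_{2d,d}'(t)$, $g_{2d,d}''(t)$, $g_{2d+1,d}'(t)$ and $g_{2d+1,d}''(t)$, so I first need to express these derivatives as sums of hypergeometric terms. Since $g_{n,d}(t)=\sum_{i\ge 1} S_i(n,d)\,t^i$ with $S_i(n,d)=\frac{(n-i-1)!}{(d-i)!(n-d-i)!(i-1)!}$, differentiating term by term gives $g_{n,d}'(t)=\sum_{i\ge 1} i\,S_i(n,d)\,t^{i-1}$ and $g_{n,d}''(t)=\sum_{i\ge 1} i(i-1)\,S_i(n,d)\,t^{i-2}$. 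Each of $S_i(n,d)\,t^i$, $i\,S_i(n,d)\,t^{i-1}$ and $i(i-1)\,S_i(n,d)\,t^{i-2}$ is a hypergeometric term in the summation index $i$ (with parameters $d$ and $t$), and the ratios of consecutive terms and of one term to another are rational in $i$, $d$, $t$, so the algorithm applies.

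Concretely, to establish \eqref{rec:g-p-n2d-1'} I would set $n=2d$ in these expressions and feed the three hypergeometric terms
$$
f_1 = S_i(2d,d)\,t^i,\qquad
f_2 = S_i(2d-1,d-1)\,t^i,\qquad
f_3 = i\,S_i(2d,d)\,t^{i-1}
$$
into {\tt Ext\_Zeil([$f_1,f_2,f_3$],$i$)}. The output $[C, Ca_2/a_1, Ca_3/a_1]$ yields a $k$-free identity $a_1 F_1 + a_2 F_2 + a_3 F_3 = 0$ where $F_1 = g_{2d,d}(t)$, $F_2 = g_{2d-1,d-1}(t)$, $F_3 = g_{2d,d}'(t)$; solving for $F_1$ and clearing denominators should produce exactly \eqref{rec:g-p-n2d-1'} after multiplying through by the factor $dt+1$. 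The same scheme with $f_3$ replaced by $i(i-1)\,S_i(2d,d)\,t^{i-2}$ handles the second-derivative recurrence \eqref{rec:g-p-n2d-1''}, and the choices $n=2d+1$, second polynomial $S_i(2d,d)\,t^i$ handle \eqref{rec:g-p-n2d+1'} and \eqref{rec:g-p-n2d+1''}. Finally I would verify the identities hold as stated by checking a few small values of $d$ (say $d=2,3,4$) directly from \eqref{g-uni-dn(t)}, which pins down the normalizing constant $C$ and confirms no spurious factor was dropped.

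The main obstacle is not conceptual but bookkeeping: the algorithm returns the coefficients only up to a common $k$-free constant and in a particular (possibly unsimplified) rational form, so one must carefully massage the output into the symmetric-looking shape with denominators $dt+1$, $dt+d+1$, $(d+1)t+1$, $(d+1)t+d+2$ appearing in the theorem, and confirm that the polynomial prefactors of $g_{2d-1,d-1}(t)$, $g_{2d,d}(t)$ match after cross-multiplication. A secondary point worth checking is the range of validity: the derivation assumes the hypergeometric terms are well defined for all $i$ in the summation range, and since $g_{2d,d}''(t)$ has a meaningful leading term only when $d\ge 2$, the restriction $d\ge 2$ in \eqref{rec:g-p-n2d-1'}--\eqref{rec:g-p-n2d-1''} and $d\ge 1$ in \eqref{rec:g-p-n2d+1'}--\eqref{rec:g-p-n2d+1''} should be noted as exactly the cases where the stated denominators are nonzero polynomials and the degree bookkeeping is consistent. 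As in the proofs of Theorems~\ref{thm:rec-g-poly} and \ref{thm:rec-g-poly2}, once the symbolic identity is produced the remaining verification is routine, so I would present one case in full (say \eqref{rec:g-p-n2d+1'}) and indicate that the other three are entirely analogous.
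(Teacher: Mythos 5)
Your proposal is correct and follows essentially the same route as the paper, which states these four recurrences without a written proof precisely because they are obtained by the same extended Zeilberger computation used for \eqref{rec:g-p-n2}, \eqref{rec:g-p-d1} and \eqref{rec:g-p-d2}: one feeds the hypergeometric terms $S_i(n,d)t^i$ together with $i\,S_i(n,d)t^{i-1}$ or $i(i-1)S_i(n,d)t^{i-2}$ into {\tt Ext\_Zeil} and normalizes the output. Your additional remarks on verifying small cases and on the role of the restriction $d\ge 2$ (where the factor $d-1$ in the denominator of \eqref{rec:g-p-n2d-1''} would otherwise vanish) are sensible and consistent with the paper's treatment.
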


\section{Real zeros and interlacing properties}\label{Sec:3}

The objective of this section is to show the real-rootedness and some interlacing properties of Speyer's $g$-polynomial $g_{n,d}(t)$. Consequently, we obtain the log-concavity and unimodality of this polynomial. The main result of this section is as follows.

\begin{theorem}\label{thm:rz-lcv-1}
For any given integers $n\ge 2$ and $1\le d \le n-1$, the Speyer's $g$-polynomial $g_{n,d}(t)$ has only real zeros.
\end{theorem}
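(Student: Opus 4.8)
The plan is to combine the symmetry $g_{n,d}(t)=g_{n,n-d}(t)$ with the three-term recurrence \eqref{rec:g-p-d1} and an induction on $d$ (with $n$ held fixed) whose invariant is an interlacing relation, the inductive step being supplied by the real-rootedness criterion of Liu and Wang \cite{LiuWang} for polynomial sequences defined by three-term recurrences.

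First I would reduce to the range $1\le d\le\lfloor n/2\rfloor$: for $d>n/2$ one has $g_{n,d}=g_{n,n-d}$ with $n-d<n/2$, so nothing new occurs, and for $d$ in the reduced range $\deg g_{n,d}=d$. Reading off \eqref{g-uni-dn(t)}, every coefficient $S_i(n,d)$ with $1\le i\le d$ is strictly positive and the constant term vanishes; hence $g_{n,d}(t)=t\,\widetilde g_{n,d}(t)$ with $\widetilde g_{n,d}$ of degree $d-1$ and with positive coefficients, and dividing \eqref{rec:g-p-d1} by $t$ shows that the $\widetilde g_{n,d}$ satisfy the very same recurrence. It therefore suffices to prove that $\widetilde g_{n,d}$ is real-rooted; in that case its zeros are all strictly negative and $g_{n,d}$ merely acquires the extra simple zero at $0$.

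The heart of the proof fixes $n\ge 3$ and inducts on $d$ via $\widetilde g_{n,d}(t)=a_d(t)\,\widetilde g_{n,d-1}(t)+b_d\,\widetilde g_{n,d-2}(t)$, the normalized form of \eqref{rec:g-p-d1}. For $2\le d\le\lfloor n/2\rfloor$ one checks that $c_d(n)$, $c_d(n+1)$, $c_d(n+2)$ and $(d-1)(n-d)$ are all positive, so that $a_d(t)$ is a polynomial of degree $1$ with positive leading coefficient and positive constant term, while $b_d=-\dfrac{(d-2)(n+1-d)c_d(n)}{(d-1)(n-d)c_d(n+2)}$ is a non-positive constant; moreover $\deg\widetilde g_{n,d}=\deg\widetilde g_{n,d-1}+1$ and the leading coefficient of $\widetilde g_{n,d}$ is positive. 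The base cases are $d=1$, where $\widetilde g_{n,1}=1$, and $d=2$, where $\widetilde g_{n,2}=a_2$ is linear with a single negative zero, whence $\widetilde g_{n,1}\preceq\widetilde g_{n,2}$. For $d\ge 3$, assuming $\widetilde g_{n,d-2}$ and $\widetilde g_{n,d-1}$ real-rooted with $\widetilde g_{n,d-2}\preceq\widetilde g_{n,d-1}$, I would apply the Liu and Wang criterion to the displayed recurrence: its hypotheses — $\deg a_d\le 1$, the inequality $b_d\le 0$ at every zero of $\widetilde g_{n,d-1}$ (it holds for all $t$ since $b_d$ is a non-positive constant), the degree incrementing by one, the positivity of the leading coefficients, and the interlacing of the two predecessors — have all been verified, so the conclusion is that $\widetilde g_{n,d}$ is real-rooted and $\widetilde g_{n,d-1}\preceq\widetilde g_{n,d}$. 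This closes the induction, and as a byproduct one obtains the interlacing chain $\widetilde g_{n,1}\preceq\widetilde g_{n,2}\preceq\cdots\preceq\widetilde g_{n,\lfloor n/2\rfloor}$.

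I expect the main obstacle to be bookkeeping rather than anything conceptual. The key structural choice is to work with \eqref{rec:g-p-d1}: in the range $d\le\lfloor n/2\rfloor$ the degree strictly increases at each step and the coefficient of $\widetilde g_{n,d-2}$ is an honest non-positive constant, whereas the alternative recurrence \eqref{rec:g-p-n1} is less convenient because $\deg g_{n,d}$ stabilizes at $d$ once $n\ge 2d$, so the degree-increment hypothesis of the criterion fails there. One must also handle the degenerate value $g_{n,0}(t)=0$ with a little care — this is why the induction starts at $d=1$ and the three-term step is only invoked for $d\ge 3$ — and confirm that the interlacing hypothesis required at step $d$ is exactly what step $d-1$ delivers, which it is.
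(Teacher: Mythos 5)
Your proposal is correct and is essentially the paper's own argument: the paper's Theorem \ref{thm:intlac-zr-2} proves the stronger interlacing statement $g_{n,1}(t)\preceq g_{n,2}(t)\preceq\cdots\preceq g_{n,[n/2]}(t)$ by the same induction on $d$ for fixed $n$, using the recurrence \eqref{rec:g-p-d1} and the Liu--Wang criterion with the same sign check on the coefficient of $g_{n,d-2}$. Your preliminary factorization $g_{n,d}(t)=t\,\widetilde g_{n,d}(t)$ is harmless but unnecessary, since the weak interlacing relation $\preceq$ and Theorem \ref{thm:Liu-Wang} already accommodate the common zero at $t=0$; the paper additionally gives a second, independent proof (Theorem \ref{thm:intlac-zr-1}) by inducting on $n$ for fixed $d$ via \eqref{rec:g-p-n1}.
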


Before proving Theorem \ref{thm:rz-lcv-1}, let us first recall some definitions and results on Sturm sequences.
Let PF be the set of real-rooted polynomials with nonnegative coefficients, including any nonnegative constant for convenience.
Given two polynomials $f(t),\,g(t)\in \mathrm{PF}$, assume $f(\alpha_i)=0$ and $g(\beta_j)=0$.
We say that $g(t)$ interlaces $f(t)$, denoted $g(t)\preceq f(t)$, if either
$\deg f(t)=\deg g(t)=n$ and
\begin{align}\label{defi:inter-z1}
{\beta_n\le \alpha_n\le\cdots\le \beta_2\le \alpha_2\le \beta_1\le \alpha_1},
\end{align}
or $\deg f(t)=\deg g(t)+1=n$ and
\begin{align}\label{defi:inter-z2}
{\alpha_{n}\le \beta_{n-1}\le \alpha_{n-1}\le\cdots\le \beta_{2}\le \alpha_{2}\le \beta_{1}\le \alpha_{1}}.
\end{align}
We say that $g(t)$ interlaces $f(t)$ strictly, denoted $g(t)\prec f(t)$, if the inequalities in \eqref{defi:inter-z1} or \eqref{defi:inter-z2} hold strictly.
Following Liu and Wang \cite{LiuWang}, we also let $a\preceq bt+c$ for any nonnegative $a,b,c$, and let $0\preceq f$ and $f\preceq 0$ for any $f\in \mathrm{PF}$.
A polynomial sequence $\{f_n(t)\}_{n\geq 0}$ with each $f_n(t)\in \mathrm{PF}$ is said to be a \emph{generalized Sturm sequence} if
$${f_0(t)\preceq f_1(t)\preceq \cdots \preceq  f_{n-1}(t)\preceq f_n(t)\preceq \cdots.}$$
Liu and Wang \cite{LiuWang} established the following sufficient condition for determining whether two polynomials have interlacing zeros.

\begin{theorem}$($\cite[Theorem 2.3]{LiuWang}$)$
\label{thm:Liu-Wang}
Let $F(t),f(t),h_1(t),\ldots,h_k(t)$ be real polynomials satisfying the following conditions.
\begin{itemize}
\item[$(i)$]There exist some real polynomials $\phi(t),\psi_1(t),\ldots,\psi_k(t)$ such that
\begin{align}\label{eq-rec-LW}
F(t)=\phi(t)f(t)+\psi_1(t)h_1(t)+\cdots+\psi_k(t)h_k(t),
\end{align}
and $\deg F(t)= \deg f(t)$ or $\deg F(t)= \deg f(t)+1$.
\item[$(ii)$] $f(t),\,h_j(t)$ are polynomials with only real zeros, and $h_j(t)\preceq f(t)$ for all $j$.
\item[$(iii)$]The leading coefficients of $F(t), h_1(t),\ldots,h_k(t)$ have the same sign.
\end{itemize}
Suppose that $\psi_j(r)\le 0$ for each $j$ and each zero $r$ of $f(t)$. Then $F(t)$ has only real zeros and $f(t)\preceq F(t)$.
In particular, if for each zero $r$ of $f(t)$, there exists an index $j$ such that
$h_j(t)\prec f(t)$  and $\psi_j(r)<0$, then $f(t)\prec F(t)$.
\end{theorem}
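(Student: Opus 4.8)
The plan is to prove the criterion by a sign-change argument: I will evaluate $F$ at the zeros of $f$, show that these values alternate in sign, and then invoke the intermediate value theorem to locate a full set of real zeros of $F$ that interlace those of $f$. Write $n=\deg f$ and list the zeros of $f$ as $\alpha_1\ge\alpha_2\ge\cdots\ge\alpha_n$. First I would treat the generic situation in which $f$ has simple zeros, no $\alpha_i$ is a zero of any $h_j$, and $F(\alpha_i)\ne 0$; the degenerate cases are deferred to a limiting argument at the end. Since $f(\alpha_i)=0$, condition $(i)$ together with \eqref{eq-rec-LW} gives
\begin{equation*}
F(\alpha_i)=\sum_{j=1}^{k}\psi_j(\alpha_i)\,h_j(\alpha_i),\qquad i=1,\dots,n.
\end{equation*}

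The key step is to pin down the sign of each $h_j(\alpha_i)$. Let $\epsilon$ be the common sign of the leading coefficients of $F,h_1,\dots,h_k$ supplied by condition $(iii)$. Because $h_j\preceq f$, every zero of $h_j$ lies weakly below $\alpha_1$, so $h_j(\alpha_1)$ has sign $\epsilon$; and between each consecutive pair $\alpha_{i+1}\le\alpha_i$ the interlacing relation \eqref{defi:inter-z1}--\eqref{defi:inter-z2} places exactly one zero of $h_j$, forcing $h_j$ to change sign exactly once on that interval. Hence $\operatorname{sign} h_j(\alpha_i)=(-1)^{i-1}\epsilon$, and crucially this sign is the same for every $j$ and is insensitive to whether $\deg h_j=n$ or $n-1$. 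Combining this with the hypothesis $\psi_j(\alpha_i)\le 0$ shows that each nonzero summand $\psi_j(\alpha_i)h_j(\alpha_i)$ has sign $(-1)^{i}\epsilon$, so all summands push in the same direction and, in the generic situation,
\begin{equation*}
\operatorname{sign}F(\alpha_i)=(-1)^{i}\epsilon,\qquad i=1,\dots,n.
\end{equation*}

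Thus the values $F(\alpha_1),\dots,F(\alpha_n)$ strictly alternate in sign, and the intermediate value theorem produces a zero of $F$ in each of the $n-1$ open intervals $(\alpha_{i+1},\alpha_i)$. To account for the remaining zeros I would compare the sign of $F(\alpha_1)$, namely $-\epsilon$, with the sign $\epsilon$ of $F$ at $+\infty$ to obtain a zero in $(\alpha_1,\infty)$; when $\deg F=n+1$, comparing the sign $(-1)^n\epsilon$ of $F(\alpha_n)$ with the sign of $F$ at $-\infty$ yields one further zero in $(-\infty,\alpha_n)$. Counting gives exactly $\deg F$ distinct real zeros, so $F$ has only real zeros, and their placement in these intervals is precisely the chain defining $f\preceq F$, in its degree-preserving or degree-raising form according to whether $\deg F=\deg f$ or $\deg f+1$. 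For the strict refinement, if for each zero $r$ of $f$ some index $j$ satisfies $h_j\prec f$ and $\psi_j(r)<0$, then the corresponding summand is nonzero and, since all summands share the sign $(-1)^i\epsilon$, one has $F(\alpha_i)\ne 0$; the interlacing zeros then lie strictly inside the open intervals, yielding $f\prec F$.

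The main obstacle is the degenerate cases: a multiple zero of $f$, or a coincidence $h_j(\alpha_i)=0$, the latter possibly forcing $F(\alpha_i)=0$ so that $\alpha_i$ is itself a zero of $F$. These break the strict alternation and the clean zero count. I would resolve them by a perturbation and continuity argument: perturb $f$ (or the whole configuration) slightly so that its zeros become simple and disjoint from the zeros of the $h_j$ while the sign hypotheses and the interlacing $h_j\preceq f$ are preserved, apply the generic case, and then pass to the limit, using that the zeros of a polynomial depend continuously on its coefficients and that the weak relation $\preceq$ is closed under such limits. Verifying that every hypothesis—especially the common leading-coefficient sign of condition $(iii)$ and the interlacing of each $h_j$ with $f$—survives the perturbation is the delicate point, and is where the careful bookkeeping will concentrate.
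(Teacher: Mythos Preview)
The paper does not prove this theorem at all: it is quoted verbatim as \cite[Theorem 2.3]{LiuWang} and used as a black box in the proofs of Theorems \ref{thm:intlac-zr-1}, \ref{thm:intlac-zr-2}, and \ref{thm:intlc-n=2d}. So there is no ``paper's own proof'' to compare against.

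That said, your sign-alternation argument is precisely the standard approach to results of this type and is essentially how Liu and Wang prove it. The core computation---that $\operatorname{sign}h_j(\alpha_i)=(-1)^{i-1}\epsilon$ uniformly in $j$, whence $\operatorname{sign}F(\alpha_i)=(-1)^{i}\epsilon$---is correct, and your endpoint analysis at $\pm\infty$ correctly accounts for the extra zero(s). The one place to be careful is the degenerate case: when some $\alpha_i$ equals a zero of some $h_j$ (allowed by the weak interlacing $\preceq$), the perturbation you propose must be set up so that the perturbed polynomials still satisfy \emph{all} of $(i)$--$(iii)$ simultaneously, including the recurrence \eqref{eq-rec-LW} itself, and you should note that the closure of $\preceq$ under limits is exactly what makes the weak conclusion $f\preceq F$ survive. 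An alternative to perturbation, which avoids that bookkeeping, is to argue directly that whenever $F(\alpha_i)=0$ the zero $\alpha_i$ itself serves as one of the required interlacing zeros of $F$, and to track the weak sign pattern $(-1)^i\epsilon\cdot F(\alpha_i)\ge 0$ together with the degree count.
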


Now we are able to prove the following result, which is stronger than Theorem \ref{thm:rz-lcv-1}.

\begin{theorem}\label{thm:intlac-zr-1}
For any given integer $d\ge 3$, the sequence $\{g_{n,d}(t)\}_{n\ge d+1}$ is a generalized Sturm sequence.
\end{theorem}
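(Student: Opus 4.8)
The plan is to prove the assertion by induction on $n$, using the three-term recurrence \eqref{rec:g-p-n1} together with the Liu--Wang criterion (Theorem~\ref{thm:Liu-Wang}). Precisely, I would establish by induction on $n\ge d+1$ the statement that $g_{n,d}(t)$ has only real zeros and, for $n\ge d+2$, that $g_{n-1,d}(t)\preceq g_{n,d}(t)$. Concatenating these interlacings then gives
\[
g_{d+1,d}(t)\preceq g_{d+2,d}(t)\preceq g_{d+3,d}(t)\preceq\cdots ,
\]
which is exactly the statement that $\{g_{n,d}(t)\}_{n\ge d+1}$ is a generalized Sturm sequence (one may even prepend $g_{d,d}(t)=0\preceq g_{d+1,d}(t)$).

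For the base cases, $g_{d+1,d}(t)=t$ is real-rooted, and from the initial values in Theorem~\ref{thm:rec-g-poly} one has $g_{d+2,d}(t)=dt+(d-1)t^{2}=t\bigl((d-1)t+d\bigr)$, with zeros $-d/(d-1)$ and $0$; since $\deg g_{d+2,d}(t)=\deg g_{d+1,d}(t)+1$ and the unique zero $0$ of $g_{d+1,d}(t)$ lies in $[-d/(d-1),0]$, we get $g_{d+1,d}(t)\preceq g_{d+2,d}(t)$. I treat this step directly rather than via Theorem~\ref{thm:Liu-Wang}, because there the term $h_{1}=g_{d,d}(t)$ vanishes.

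For the inductive step, fix $n\ge d+3$ and apply Theorem~\ref{thm:Liu-Wang} to \eqref{rec:g-p-n1} with $F(t)=g_{n,d}(t)$, $f(t)=g_{n-1,d}(t)$, $h_{1}(t)=g_{n-2,d}(t)$, $\phi(t)=\bigl((2d-n+1)t+n-2\bigr)/(n-d-1)$ and $\psi_{1}(t)=(n-d-2)t/(n-d-1)$. Condition~(ii) is the induction hypothesis: $g_{n-2,d}(t)\preceq g_{n-1,d}(t)$ and both are real-rooted (here $n-2\ge d+1$, so $h_{1}\neq 0$). For condition~(iii), every coefficient of each $g_{m,d}(t)$ is positive by \eqref{g-uni-dn(t)}, so $F(t)$ and $h_{1}(t)$ have positive leading coefficients. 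For condition~(i), the identity $\deg g_{m,d}(t)=\min(d,m-d)$ shows $\deg g_{n,d}(t)-\deg g_{n-1,d}(t)\in\{0,1\}$, so $\deg F(t)$ equals $\deg f(t)$ or $\deg f(t)+1$. Finally $n-d-1>0$ and $n-d-2\ge0$, so $\psi_{1}(t)=ct$ with $c\ge 0$; and since a nonzero polynomial with nonnegative coefficients is positive on $(0,\infty)$, every zero $r$ of $f(t)=g_{n-1,d}(t)$ satisfies $r\le0$, whence $\psi_{1}(r)\le 0$. Theorem~\ref{thm:Liu-Wang} then yields that $g_{n,d}(t)$ has only real zeros and $g_{n-1,d}(t)\preceq g_{n,d}(t)$, closing the induction.

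I do not anticipate a genuine obstacle here; once \eqref{rec:g-p-n1} is in hand the verification is essentially bookkeeping. The two points deserving care are the degree condition~(i), where one observes that the jump in $\deg g_{n,d}(t)$ occurs only at $n=2d$ and is always by at most one, and the degenerate base step $n=d+2$ handled above. If a strict interlacing $g_{n-1,d}(t)\prec g_{n,d}(t)$ were desired, one would feed the differential recurrence \eqref{rec:g-p-n2} into the last clause of Theorem~\ref{thm:Liu-Wang}, but this is not needed for the generalized Sturm property.
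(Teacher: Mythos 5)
Your proposal is correct and follows essentially the same route as the paper: induction on $n$, with base case $g_{d+1,d}(t)=t\preceq t\bigl((d-1)t+d\bigr)=g_{d+2,d}(t)$, and the inductive step obtained by applying Theorem~\ref{thm:Liu-Wang} to the recurrence \eqref{rec:g-p-n1} with $\psi_1(t)=(n-d-2)t/(n-d-1)\le 0$ at the (non-positive) zeros of $g_{n-1,d}(t)$. Your explicit handling of the degenerate step $n=d+2$ and of the degree bookkeeping via $\deg g_{m,d}(t)=\min(d,m-d)$ is a slightly more careful write-up of the same argument.
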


\begin{proof}
Use induction on $n$. By \eqref{g-uni-dn(t)} we have $g_{d+1,d}(t)=t$ and $g_{d+2,d}(t)=(d-1)t^2+dt$.
Clearly, both $g_{d+1,d}(t)$ and $g_{d+2,d}(t)$ are real-rooted polynomials, and
\begin{align}\label{prec:d1}
 g_{d+1,d}(t) \preceq g_{d+2,d}(t).
\end{align}
Assume that $g_{n,d}(t)$ and $g_{n+1,d}(t)$ are real-rooted polynomials and
$g_{n,d}(t) \preceq g_{n+1,d}(t)$. We proceed to  show that
\begin{align}\label{prec:dn+1}
 g_{n+1,d}(t) \preceq g_{n+2,d}(t).
\end{align}
For this purpose, we shall employ Theorem \ref{thm:Liu-Wang}. First, by recursion \eqref{rec:g-p-n1}, we have
\begin{align}\label{rec:g-p-n12}
 g_{n+2,d}(t)
=&\, \frac{(2d-n-1)t+n}{n-d+1} g_{n+1,d}(t)
 +\frac{t(n-d)}{n-d+1} g_{n,d}(t),\qquad n\ge d+1.
\end{align}
Observe that \eqref{rec:g-p-n12} is of the form of \eqref{eq-rec-LW} with $k=1$, where
$$F(t)=g_{n+2,d}(t),\quad f(t)=g_{n+1,d}(t),\quad h_1(t)=g_{n,d}(t),$$ and
\begin{align*}
\phi(t)=\frac{(2d-n-1)t+n}{n-d+1},\quad
\psi_1(t)=\frac{t(n-d)}{n-d+1}.
\end{align*}
By \eqref{g-uni-dn(t)}, we have that $\deg F(t)= \deg f(t)+1=n-d+2$ for $1\le n-d\le d-2$ and $\deg F(t)= \deg f(t)=d$ for $n-d\ge d-1$.
Thus, condition (i) of Theorem \ref{thm:Liu-Wang} is satisfied.
By the inductive hypothesis that $g_{n,d}(t) \preceq g_{n+1,d}(t)$, condition (ii) of Theorem \ref{thm:Liu-Wang} also holds.
Moreover, the leading coefficients of $F(t)$ and $h_1(t)$ are all positive by \eqref{g-uni-dn(t)}. Therefore, condition (iii) of Theorem \ref{thm:Liu-Wang} is also satisfied. It remains to show that $\psi_1(r)\le 0$ for each zero $r$ of $f(t)$. Note that $f(t)$ has only non-positive zeros since its coefficients are all non-negative. Clearly, for $t\le 0$ and $n\ge d+1$, we have $\psi_1(t)\le 0$.
From Theorem \ref{thm:Liu-Wang} it follows that \eqref{prec:dn+1} holds and $g_{n+2,d}(t)$ has only real zeros.
This completes the proof.
\end{proof}

It is interesting that the polynomials $g_{n,d}(t)$ also enjoy some interlacing property for fixed $n$. Precisely we have the following result,
which is also stronger than Theorem \ref{thm:rz-lcv-1}.

\begin{theorem}\label{thm:intlac-zr-2}
For any given integer $n\ge 2$, the sequence $\{g_{n,d}(t)\}_{d=1}^{[n/2]}$ is
a generalized Sturm sequence.
\end{theorem}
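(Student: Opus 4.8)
The plan is to argue exactly as in the proof of Theorem \ref{thm:intlac-zr-1}, but inducting on the rank $d$ instead of on $n$ and using a ``row'' recurrence from Theorem \ref{thm:rec-g-poly2}. The cleanest choice is \eqref{rec:g-p-d1}, which writes $g_{n,d}(t)=\phi(t)\,g_{n,d-1}(t)+\psi_1\,g_{n,d-2}(t)$ with
$$\psi_1=-\frac{(d-2)(n+1-d)\,c_d(n)}{(d-1)(n-d)\,c_d(n+2)}, \qquad c_d(n)=n+1-2d,$$
a \emph{constant} in $t$. Since $1\le d\le[n/2]$ forces $\deg g_{n,d}(t)=d$ and makes all coefficients of $g_{n,d}(t)$ positive by \eqref{g-uni-dn(t)}, every structural hypothesis of the Liu--Wang criterion (Theorem \ref{thm:Liu-Wang}) will be immediate, and the only real point is the sign of $\psi_1$.

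First I would dispose of $n=2,3$: there $[n/2]=1$ and the sequence is the single polynomial $g_{n,1}(t)=t\in\mathrm{PF}$, so there is nothing to prove. For $n\ge4$, the base step is $g_{n,1}(t)=t\preceq (n-2)t+(n-3)t^2=g_{n,2}(t)$, which is checked directly: the zeros $0$ and $-(n-2)/(n-3)$ of $g_{n,2}(t)$ interlace the zero $0$ of $g_{n,1}(t)$ in the sense of \eqref{defi:inter-z2}. (This is also the case $d=2$ of \eqref{rec:g-p-d1}, where $\psi_1=0$.) For the inductive step, fix $d$ with $3\le d\le[n/2]$, assume $g_{n,d-2}(t)\preceq g_{n,d-1}(t)$ with both real-rooted, and apply Theorem \ref{thm:Liu-Wang} to \eqref{rec:g-p-d1} with $F=g_{n,d}$, $f=g_{n,d-1}$, $h_1=g_{n,d-2}$, and $\phi$, $\psi_1$ the coefficients just described. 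Condition (i) holds because $\deg F=d=\deg f+1$; condition (ii) is precisely the inductive hypothesis; condition (iii) holds because the leading coefficients of $g_{n,d}$ and $g_{n,d-2}$ are positive. Finally $\psi_1(r)\le0$ at every zero $r$ of $f$ holds trivially because $\psi_1$ is independent of $t$ and, for $3\le d\le[n/2]$, every factor in its numerator and denominator is positive --- in particular $c_d(n)=n+1-2d\ge1$ and $c_d(n+2)=n+3-2d\ge3$, precisely because $d\le[n/2]$ --- so $\psi_1<0$. Theorem \ref{thm:Liu-Wang} then gives that $g_{n,d}(t)$ is real-rooted and $g_{n,d-1}(t)\preceq g_{n,d}(t)$. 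Iterating yields $g_{n,1}(t)\preceq g_{n,2}(t)\preceq\cdots\preceq g_{n,[n/2]}(t)$, as required, and in particular reproves the real-rootedness asserted in Theorem \ref{thm:rz-lcv-1}.

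The main (and essentially the only) obstacle is the elementary bookkeeping that $\deg g_{n,d}(t)=d$ and that the quantities $c_d(n)$, $c_d(n+1)$, $c_d(n+2)$ remain positive throughout the range $1\le d\le[n/2]$; this is exactly where the hypothesis $d\le[n/2]$ (rather than $d\le n-1$) is used, and it is what pins down the sign $\psi_1\le0$ demanded by Theorem \ref{thm:Liu-Wang}. One could instead start from \eqref{rec:g-p-d2}, but there $\psi_1(t)=-t(t+1)(n+1-2d)/\big((d-1)(n-d)\big)$ is cubic in $t$, so verifying $\psi_1(r)\le0$ at the zeros $r$ of $g_{n,d-1}(t)$ would first require showing those zeros all lie in $[-1,0]$; recurrence \eqref{rec:g-p-d1} avoids this complication entirely.
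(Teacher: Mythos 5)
Your proof is correct and follows essentially the same route as the paper: induction on $d$ using recurrence \eqref{rec:g-p-d1} together with the Liu--Wang criterion (Theorem \ref{thm:Liu-Wang}), with the key point in both cases being that the $t$-free coefficient $\psi_1$ is negative precisely because $d\le[n/2]$ keeps $c_d(n)$ and $c_d(n+2)$ positive. The only differences are cosmetic (an index shift in how the recurrence is invoked, and handling $2\le n\le 5$ by a trivial/base-case argument rather than direct verification).
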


\begin{proof}
For $2\leq n\leq 5$, one can directly use \eqref{g-uni-dn(t)} to verify the assertion.
We may assume that $n\ge 6$. It remains to show that for $1\le d \le [n/2]-1$,
\begin{align}\label{intlc-d}
g_{n,d}(t)\preceq g_{n,d+1}(t).
\end{align}
We apply induction on $d$ to prove \eqref{intlc-d}. First, by \eqref{g-uni-dn(t)}, we get that $g_{n,1}(t)=t$ and $g_{n,2}(t)=(n-3)t^2+(n-2)t$.
It is clear that \eqref{intlc-d} holds for $d=1$. Assuming \eqref{intlc-d} for some $d \le [n/2]-2$, we proceed to show that
\begin{align}\label{intlc-d12}
g_{n,d+1}(t)\preceq g_{n,d+2}(t).
\end{align}

By the recurrence relation \eqref{rec:g-p-d1}, we have
\begin{align}\label{eq:recu-gnd+2}
 g_{n,d+2}(t)
= C_1 \cdot g_{n,d+1}(t) + C_2 \cdot g_{n,d}(t)
\end{align}
for $1\le d\le [n/2]-1$, where
\begin{align*}
C_1=&\, \frac{(n-2-2d)\big((n-3-2d)(n-1-2d)t+(n-d-2)^2+d^2+n-2\big)}
{(d+1)(n-2-d)(n-1-2d)},\\
C_2=&\,
 -\frac{d(n-1-d)(n-3-2d)}{(d+1)(n-2-d)(n-1-2d)}.
\end{align*}
Notice that \eqref{eq:recu-gnd+2} is of the form of \eqref{eq-rec-LW} with $k=1$, where $F(t)=g_{n,d+2}(t)$, $f(t)=g_{n,d+1}(t)$, $h_1(t)=g_{n,d}(t)$,
$\phi(t)=C_1$, and $\psi_1(t)=C_2$. The conditions (i), (ii), and (iii) of Theorem \ref{thm:Liu-Wang} are easily checked to be true for $1\le d \le [n/2]-2$. In order to prove \eqref{intlc-d12}, it remains to show $C_2<0$ for $1\le d \le [n/2]-2$, which is evident. This completes the proof.
\end{proof}

By applyig Theorem \ref{thm:gpn=2d}, we obtain three new interlacing properties for Speyer's $g$-polynomials.

\begin{theorem}\label{thm:intlc-n=2d}
The sequences $\{g_{2d,d}(t)\}_{d\ge 1}$, $\{g_{2d+1,d}(t)\}_{d\ge 1}$, and $\{g_{n,[n/2]}(t)\}_{n\ge 2}$ are generalized Sturm sequences. That is,
\begin{align}
g_{2d,d}(t) & \preceq g_{2d+2,d+1}(t),\quad d\ge 1, \label{inter:2d}\\
g_{2d+1,d}(t) & \preceq g_{2d+3,d+1}(t), \quad d\ge 1, \label{inter:2d+1}\\
g_{n,[n/2]}(t) & \preceq g_{n+1,[(n+1)/2]}(t), \quad n\ge 2. \label{inter:2n}
\end{align}
\end{theorem}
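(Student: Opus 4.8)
The plan is to prove the three interlacing relations \eqref{inter:2d}, \eqref{inter:2d+1}, and \eqref{inter:2n} by applying Theorem \ref{thm:Liu-Wang} to the recurrence relations supplied by Theorem \ref{thm:gpn=2d}, exactly in the spirit of the proofs of Theorems \ref{thm:intlac-zr-1} and \ref{thm:intlac-zr-2}. The key point is that in each of the three recurrences \eqref{rec:g-p-n2d}, \eqref{rec:g-p-n2do}, and the pair \eqref{rec:g-p-n2d+1}--\eqref{rec:g-p-n2d-1}, the new polynomial is expressed as a polynomial combination of two consecutive earlier members of the sequence, so that the recurrences are already in the form \eqref{eq-rec-LW} with $k=1$ (or $k=2$, if one prefers to use \eqref{rec:g-p-n2d} and \eqref{rec:g-p-n2do} directly with two history terms). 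Since each $g_{n,d}(t)$ lies in $\mathrm{PF}$ by Theorem \ref{thm:rz-lcv-1}, it has only non-positive zeros, so it suffices to check that the relevant coefficient polynomial $\psi_j(t)$ is $\le 0$ for all $t\le 0$.

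Concretely, for \eqref{inter:2n} I would proceed by induction on $n$, splitting into the even and odd cases. When $n=2d$ is even, $[n/2]=d$ and $[(n+1)/2]=d$, so \eqref{inter:2n} reads $g_{2d,d}(t)\preceq g_{2d+1,d}(t)$; this is precisely \eqref{inter:2d+1}-type data obtained from \eqref{rec:g-p-n2d+1}, where $F(t)=g_{2d+1,d}(t)$, $f(t)=g_{2d,d}(t)$, $h_1(t)=g_{2d-1,d-1}(t)$, $\phi(t)=(2d-1)/d$, and $\psi_1(t)=(d-1)t/d$, which is $\le 0$ for $t\le 0$ and $d\ge 1$; the hypothesis $h_1\preceq f$ is exactly the odd-to-even step $g_{2d-1,d-1}(t)\preceq g_{2d,d}(t)$, i.e.\ the previous instance of \eqref{inter:2n}. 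When $n=2d+1$ is odd, $[n/2]=d$ and $[(n+1)/2]=d+1$, so \eqref{inter:2n} reads $g_{2d+1,d}(t)\preceq g_{2d+2,d+1}(t)$, which is the relation \eqref{rec:g-p-n2d-1}: here $F(t)=g_{2d+2,d+1}(t)$, $f(t)=g_{2d+1,d}(t)$, $h_1(t)=g_{2d,d}(t)$, $\phi(t)=2$, $\psi_1(t)=t\le 0$ for $t\le 0$, and the needed $h_1\preceq f$ is again the previous (even-$n$) instance of \eqref{inter:2n}. One has to verify the degree condition (i) of Theorem \ref{thm:Liu-Wang} — that $\deg F=\deg f$ or $\deg f+1$ — which follows from $\deg g_{n,[n/2]}=[n/2]$, and condition (iii) on leading coefficients, which is immediate from the positivity of the coefficients in \eqref{g-uni-dn(t)}. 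The base case $n=2$ is checked directly from \eqref{g-uni-dn(t)}.

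For \eqref{inter:2d} I would likewise induct on $d$, using \eqref{rec:g-p-n2d} as an instance of \eqref{eq-rec-LW} with $k=2$: $F(t)=g_{2d+2,d+1}(t)$, $f(t)=g_{2d,d}(t)$, $h_1(t)=g_{2d,d}(t)$ (so $h_1=f$ and trivially $h_1\preceq f$) with $\psi_1(t)=-(d-1)t^2/d+$ part of $\phi$, and $h_2(t)=g_{2d-2,d-1}(t)$ with $\psi_2(t)=-(d-1)t^2/d\le 0$; alternatively, and more cleanly, factor the step through \eqref{rec:g-p-n2d-1} and \eqref{rec:g-p-n2d+1}: since \eqref{rec:g-p-n2d-1} gives $g_{2d+1,d}(t)\preceq g_{2d+2,d+1}(t)$ (shown above as the odd case of \eqref{inter:2n}) and \eqref{rec:g-p-n2d+1} gives $g_{2d,d}(t)\preceq g_{2d+1,d}(t)$ (the even case), transitivity of $\preceq$ on a generalized Sturm chain yields $g_{2d,d}(t)\preceq g_{2d+2,d+1}(t)$. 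Indeed, all three assertions of the theorem then follow at once from the single chain
\[
 g_{2,1}(t)\preceq g_{3,1}(t)\preceq g_{4,2}(t)\preceq g_{5,2}(t)\preceq g_{6,3}(t)\preceq\cdots,
\]
which is precisely the statement that $\{g_{n,[n/2]}(t)\}_{n\ge 2}$ is a generalized Sturm sequence; \eqref{inter:2d} and \eqref{inter:2d+1} are obtained by reading off every second link, once one knows that $g_{2d-1,d-1}\preceq g_{2d,d}\preceq g_{2d+1,d}$ forces $g_{2d-1,d-1}\preceq g_{2d+1,d}$ and similarly for the even-indexed terms (interlacing composes along a Sturm chain).

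The main obstacle is the verification, at each inductive step, that the recurrence genuinely has the shape \eqref{eq-rec-LW} with the right degree bookkeeping and with $\psi_j\le 0$ on $(-\infty,0]$ — in other words, confirming that the "$\phi$" parts carry the degree increase while the "$\psi_j h_j$" parts do not spoil it, and that no sign goes the wrong way. For \eqref{rec:g-p-n2d-1} and \eqref{rec:g-p-n2d+1} this is transparent since $\phi$ is a positive constant and $\psi_1$ is a nonnegative constant times $t$; for \eqref{rec:g-p-n2d} and \eqref{rec:g-p-n2do}, where the $t^2$ coefficients and the $(d-1)$, $(2d-1)$ factors appear, one must check positivity/negativity of each factor in the stated ranges of $d$, but these are elementary once $d\ge 1$ (for \eqref{inter:2d}) or $d\ge 1$ (for \eqref{inter:2d+1}). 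The only genuinely delicate bookkeeping is the boundary behaviour near $d\in\{1,2\}$, where some $g$-polynomials degenerate (e.g.\ $g_{2,1}(t)=t$, $g_{0,0}(t)=g_{1,0}(t)=0$) and the conventions $0\preceq f$, $f\preceq 0$, $a\preceq bt+c$ of Liu--Wang must be invoked; these cases I would dispatch by direct computation from \eqref{g-uni-dn(t)} before launching the induction.
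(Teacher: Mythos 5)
Your main engine---induction combined with Theorem \ref{thm:Liu-Wang} applied to the recurrences of Theorem \ref{thm:gpn=2d}---is exactly what the paper does. Your treatment of \eqref{inter:2n} (alternating the step via \eqref{rec:g-p-n2d+1}, with $\psi_1(t)=dt/(d+1)\le 0$ at the non-positive zeros, and the step via \eqref{rec:g-p-n2d-1}, with $\psi_1(t)=t$) reproduces the paper's proof of the chain $g_{2d,d}\preceq g_{2d+1,d}\preceq g_{2d+2,d+1}$, and your first option for \eqref{inter:2d}---reading \eqref{rec:g-p-n2d} as \eqref{eq-rec-LW}---is the paper's argument, except that your bookkeeping (``$h_1=f$, $k=2$, $\psi_1$ equal to part of $\phi$'') is an unnecessary contortion: the clean reading is $k=1$ with $\phi(t)=(t+2)(2d-1)/d$, $h_1=g_{2d-2,d-1}$ and $\psi_1(t)=-(d-1)t^2/d\le 0$ for all real $t$. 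The analogous direct use of \eqref{rec:g-p-n2do}, with $\psi_1(t)=-(2d+1)(d-1)t^2/\bigl((d+1)(2d-1)\bigr)\le 0$, gives \eqref{inter:2d+1}; this is also how the paper proceeds.

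The route you advertise as ``cleaner''---deducing \eqref{inter:2d} and \eqref{inter:2d+1} from the single chain $g_{2,1}\preceq g_{3,1}\preceq g_{4,2}\preceq\cdots$ by ``transitivity of $\preceq$ along a Sturm chain''---is a genuine gap and must be discarded. The relation $\preceq$ is not transitive, and ``reading off every second link'' is precisely the composition that can fail. If $f\preceq g$ with $\deg f=\deg g=d$ and $g\preceq h$ with $\deg h=d+1$, and $\beta_i,\alpha_i,\gamma_i$ denote the zeros of $f,g,h$ indexed decreasingly as in \eqref{defi:inter-z1}--\eqref{defi:inter-z2}, the hypotheses give $\beta_i\le\alpha_i\le\gamma_i$ and $\gamma_{i+1}\le\alpha_i$, but the inequality $\gamma_{i+1}\le\beta_i$ required for $f\preceq h$ does not follow. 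For instance, $f=(t+3)(t+1)$, $g=(t+2)(t+\tfrac12)$ and $h=(t+\tfrac52)(t+\tfrac32)(t+\tfrac15)$ satisfy $f\preceq g$ and $g\preceq h$, yet $f\not\preceq h$ because $-\tfrac52\le -3$ fails. So \eqref{inter:2d} and \eqref{inter:2d+1} really do require their own inductions on \eqref{rec:g-p-n2d} and \eqref{rec:g-p-n2do} (with base cases $g_{2,1}\preceq g_{4,2}$ and $g_{3,1}\preceq g_{5,2}$ checked from \eqref{g-uni-dn(t)}); with the transitivity shortcut removed, your proposal coincides with the paper's proof.
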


\begin{proof}

We first prove the interlacing property \eqref{inter:2d}.
For $d=1$, it is easily checked that $g_{2,1}(t)=t$ and $g_{4,2}(t)=t^2+2t$ by \eqref{g-uni-dn(t)}. Clearly, $g_{2,1}(t)\preceq g_{4,2}(t)$. Hence, we have \eqref{inter:2d} holds for $d=1$. Assume that \eqref{inter:2d} is true for $d\ge 1$. It suffices to show that
\begin{align}\label{inter:2d2}
g_{2d+2,d+1}(t) \preceq g_{2d+4,d+2}(t),\quad d\ge 1.
\end{align}
By the recurrence relation \eqref{rec:g-p-n2d} given in Theorem \ref{thm:gpn=2d} and the sufficient condition showed in Theorem \ref{thm:Liu-Wang}, we can derive the interlacing relation \eqref{inter:2d2} for $d\ge 1$ in a similar argument of the proof of Theorem \ref{thm:intlac-zr-1}, and the detailed proof is omitted here.

To prove the interlacing property \eqref{inter:2d+1}, observe by \eqref{g-uni-dn(t)} that $g_{3,1}(t)=t$, $g_{5,2}(t)=2t^2+3t$, and $g_{3,1}(t)\preceq g_{5,2}(t)$. Then in a similar argument as described above, one can obtain \eqref{inter:2d+1} by employing Theorem \ref{thm:Liu-Wang} and the recurrence relation \eqref{rec:g-p-n2do}.

We proceed to prove the  interlacing relation \eqref{inter:2n}. It suffices to show that for $d\ge 1$,
\begin{align}\label{inter:2d+12}
g_{2d,d}(t) \preceq g_{2d+1,d}(t) \preceq g_{2d+2,d+1}(t).
\end{align}
It is routine to check that $g_{2,1}(t)\preceq  g_{3,1}(t)\preceq g_{4,2}(t)\preceq  g_{5,2}(t)$ by using the explicit expressions of these four polynomials as shown above.
Thus, \eqref{inter:2d+12} holds for $d=1$. Assume that \eqref{inter:2d+12} holds for $d\ge 1$. We aim to show that
\begin{align}\label{inter:2d+123}
g_{2d+2,d+1}(t) \preceq g_{2d+3,d+1}(t) \preceq g_{2d+4,d+2}(t),
\end{align}
for $d\ge 1$.
By \eqref{rec:g-p-n2d+1} we have for $d\ge 1$,
$$
 g_{2d+3,d+1}(t)
=\frac{2d+1}{d+1} g_{2d+2,d+1}(t) + t\frac{d}{d+1} g_{2d+1,d}(t),
$$
which is of the form of \eqref{eq-rec-LW} with $k=1$. It is easy to verify that the conditions of Theorem \ref{thm:Liu-Wang} are satisfied, and hence
\begin{align}\label{inte:g2d23}
g_{2d+2,d+1}(t) \preceq g_{2d+3,d+1}(t).
\end{align}
Moreover, by \eqref{rec:g-p-n2d-1}, we have for $d\ge 1$,
$$
 g_{2d+4,d+2}(t)
=2\, g_{2d+3,d+1}(t) + t\, g_{2d+2,d+1}(t),
$$
which is of the form of \eqref{eq-rec-LW} with $k=1$.
In the same manner we get that
\begin{align}\label{inte:g2d34}
g_{2d+3,d+1}(t) \preceq g_{2d+4,d+2}(t).
\end{align}
Combining \eqref{inte:g2d23} and \eqref{inte:g2d34} yields \eqref{inter:2d+123}.
This completes the proof.
\end{proof}

\section{Asymptotic normality}\label{Sec:4}

This section is devoted to the study of asymptotic normality of the coefficients of Speyer's $g$-polynomial $g_{n,d}(t)$. 
The coefficients of many real-rooted polynomials appear to be asymptotically normal, see \cite{Bender, CMW-2020, CYZ-2022, CWZ-2020,LWW-2023} for examples.
Since each $g$-polynomial $g_{n,d}(t)$ has only real zeros, it is natural to investigate that whether Speyer's $g$-polynomials possess this asymptotic property.

Before showing the main result of this section, we first recall some concepts and results.
Let $\{f_n(t)\}_{n\geq 0}$ be a polynomial sequence with nonnegative coefficients, say
\begin{align}\label{def-f}
f_n(t)=\sum_{k=0}^{n}a(n,k)t^k.
\end{align}
The coefficient $a(n,k)$ is said to be asymptotically normal with mean $\mu_n$ and variance $\sigma_n^2$ by a central limit theorem if
\begin{align*}
\lim\limits_{n\rightarrow \infty} \sup\limits_{t\in \mathbb{R}}
\left| \sum\limits_{k\leq \mu_n+t\sigma_n} p(n,k) -\frac{1}{\sqrt{2\pi}}\int_{-\infty}^t \exp({-x^2/2}) dx \right|=0,
\end{align*}
where $p(n,k)=\frac{a(n,k)}{\sum_{j=0}^{n}a(n,j)}$.
The coefficient $a(n,k)$ is said to be asymptotically normal with mean $\mu_n$ and variance $\sigma_n^2$  by a local limit theorem on the real set $\mathbb{R}$ if
\begin{align*}
\lim\limits_{n\rightarrow \infty} \sup\limits_{t\in \mathbb{R}}
\left| \sigma_n p(n,\lfloor\mu_n+t \sigma_n\rfloor) -\frac{1}{\sqrt{2\pi}} \exp({-{t^2}/{2}}) \right|=0.
\end{align*}

The main result of this section is stated as follows.

\begin{theorem}\label{thm:asy-Sgp}
The coefficients of the Speyer's $g$-polynomial $g_{n,[n/2]}(t)$ is asymptotically normal by local and central limit theorems.
\end{theorem}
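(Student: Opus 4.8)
The plan is to apply the criterion of Bender \cite{Bender} and Harper \cite{Harper}: for a sequence of real-rooted polynomials with nonnegative coefficients, asymptotic normality of the coefficients by both a central and a local limit theorem follows once the variance of the associated distribution tends to infinity. Write $P_n(t)=g_{n,[n/2]}(t)$. By Theorem \ref{thm:rz-lcv-1}, $P_n$ is real-rooted with nonnegative coefficients, and $P_n(0)=0$; hence $P_n(t)/P_n(1)$ is the probability generating function of a random variable $X_n=1+\sum_j Y_j$, a translate of a sum of independent Bernoulli variables, with mean $\mu_n=P_n'(1)/P_n(1)$ and variance $\sigma_n^2=P_n''(1)/P_n(1)+\mu_n-\mu_n^2$. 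It therefore suffices to show $\sigma_n^2\to\infty$, so the whole problem reduces to estimating $P_n(1),P_n'(1),P_n''(1)$.

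I would handle $n$ even and odd in parallel. Set $A_d=g_{2d,d}(1)$ and $B_d=g_{2d+1,d}(1)$. Evaluating \eqref{rec:g-p-n2d-1} at $t=1$ gives $A_{d+1}=2B_d+A_d$, so $B_d=(A_{d+1}-A_d)/2$; combining this with \eqref{rec:g-p-n2d+1} at $t=1$ and eliminating $B$ yields
\[
d\,A_{d+1}=3(2d-1)A_d-(d-1)A_{d-1},\qquad A_1=1,\ A_2=3 ,
\]
which identifies $A_d$ with the shifted central Delannoy number $D_{d-1}$, where $\sum_{m\ge0}D_m z^m=(1-6z+z^2)^{-1/2}$. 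Next, evaluating the four recurrences of Theorem \ref{thm:rec-f'f''} at $t=1$ expresses $g_{2d,d}'(1),g_{2d,d}''(1)$ through $A_d,B_{d-1}$ and $g_{2d+1,d}'(1),g_{2d+1,d}''(1)$ through $A_d,B_d$; after using $B_d=(A_{d+1}-A_d)/2$ and simplifying, all moments are governed by $A_d$ and a single ratio of consecutive $A_d$'s. Writing $\rho_d=A_{d-1}/A_d$, $u=(d-1)\rho_d$ and $w=A_d/B_d=2A_d/(A_{d+1}-A_d)$, a routine calculation produces the exact identities
\[
16\,\sigma_{2d}^{2}=-d^{2}+4d-3+6du-4u-u^{2},\qquad
4\,\sigma_{2d+1}^{2}=d^{2}+d-2d^{2}w+dw-d^{2}w^{2}.
\]

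It remains to estimate $\rho_d$ and $w$. From the three-term recurrence above (equivalently, from the classical asymptotics $D_m\sim c\,\alpha^{m}m^{-1/2}$ with $\alpha=3+2\sqrt2$, which possesses a full expansion in powers of $1/m$), a short bootstrap gives $A_{d+1}/A_d=\alpha(1-\tfrac1{2d})+O(d^{-2})$, whence $\rho_d=\beta+\tfrac{\beta}{2d}+O(d^{-2})$ and $w=(\sqrt2-1)+\tfrac1{4d}+O(d^{-2})$, with $\beta=\alpha^{-1}=3-2\sqrt2$. Plugging these into the two identities, the $d^{2}$-contributions cancel identically — precisely because $\beta$ (respectively $\sqrt2-1$) is a root of $x^{2}-6x+1$ (respectively $x^{2}+2x-1$) — and there remains
\[
\sigma_{2d}^{2}=\tfrac{\sqrt2}{8}\,d+O(1),\qquad \sigma_{2d+1}^{2}=\tfrac{\sqrt2}{8}\,d+O(1).
\]
In either parity $\sigma_n^{2}=\tfrac{\sqrt2}{16}\,n+O(1)\to\infty$, and the theorem follows from the Bender--Harper criterion.

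The crux — and the only genuinely delicate point — is this second-order cancellation. Since $\deg P_n=[n/2]=\Theta(n)$, a priori one only knows $\sigma_n^2=O(n^2)$, and the $\Theta(n^2)$ part of the variance vanishes identically, reflecting the fact that the coefficients of $P_n$ are sharply concentrated about their mean. Extracting the surviving $\Theta(n)$ term thus forces one to compute the central Delannoy ratio $A_{d-1}/A_d$ to second order (the $O(d^{-2})$-accurate expansion above) and to verify, after the cancellation, that the linear term does not also vanish; this is where essentially all the work lies. The remaining ingredients — the reduction to $\sigma_n^2\to\infty$, the evaluation of the recurrences at $t=1$, and the bootstrap for the ratio — are routine.
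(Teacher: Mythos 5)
Your proposal is correct, and its skeleton coincides with the paper's: both reduce the theorem to $\sigma_n^2\to\infty$ via Bender's criterion, evaluate the recurrences of Theorems \ref{thm:gpn=2d} and \ref{thm:rec-f'f''} at $t=1$, and arrive at an exact expression for $\sigma_n^2$ as a quadratic in a ratio of consecutive values of $f_n(1)=g_{n,[n/2]}(1)$; your identities in $u$ and $w$ are equivalent, via $B_{d-1}=(A_d-A_{d-1})/2$, to the paper's Lemma \ref{lem-3}, and I have checked that they, the three-term recurrence $dA_{d+1}=3(2d-1)A_d-(d-1)A_{d-1}$ (which indeed follows from \eqref{rec:g-p-n2d+1} and \eqref{rec:g-p-n2d-1} at $t=1$), and the Delannoy identification $A_d=D_{d-1}$ are all correct. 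Where you genuinely diverge is the final estimate. The paper proves, by induction on the continued-fraction recursion for $r_n(1)$, the one-sided bounds $r_{2m}(1)<\sqrt2-1$ and $r_{2m+1}(1)<-1+\sqrt{2m^2+m}/m$, which force the quadratic to exceed an explicit positive multiple of $n$; this is entirely elementary but yields only $\sigma_n^2=\Omega(n)$. You instead expand the ratio to second order and obtain the sharper asymptotic $\sigma_n^2=\frac{\sqrt2}{16}\,n+O(1)$; the expansions $\rho_d=\beta+\frac{\beta}{2d}+O(d^{-2})$, $w=(\sqrt2-1)+\frac1{4d}+O(d^{-2})$ and the ensuing cancellations all check out. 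The one step you should not undersell is exactly the one you flag: since the $\Theta(d^2)$ term cancels, knowing only $\lim A_{d+1}/A_d=3+2\sqrt2$ is insufficient, so the ``short bootstrap'' must either cite the classical full asymptotic expansion of $D_m$ (singularity analysis of $(1-6z+z^2)^{-1/2}$) or carry out a genuine two-stage analysis of $q_d=6-3/d-(1-1/d)/q_{d-1}$, including a proof that $q_d$ converges to the larger root of $x^2-6x+1$. With that detail supplied, your argument is complete and in fact stronger than the statement requires.
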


In order to prove Theorem \ref{thm:asy-Sgp}, we shall employ the following criterion provided by Bender \cite{Bender}. See also Harper \cite{Harper}.

\begin{theorem}[{\cite[Theorem 2]{Bender}}]\label{lemm-asymp-normal}
Let $\{f_n(t)\}_{n\geq 0}$ be a real-rooted polynomial sequence with nonnegative coefficients as in \eqref{def-f}.
Let
\begin{align}
\mu_n&=\frac{f_n'(1)}{f_n(1)},\label{eq-mean}\\[5pt]
\sigma_n^2&=\frac{f_n''(1)}{f_n(1)}+\mu_n-\mu_n^2.\label{eq-var}
\end{align}
If $\sigma_n^2\rightarrow +\infty$ as $n \rightarrow +\infty$, then the coefficient of $f_n(t)$ is asymptotically normal  with mean $\mu_n$ and variance $\sigma_n^2$  by local and central limit theorems.
\end{theorem}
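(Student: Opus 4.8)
The plan is to verify the hypotheses of Theorem \ref{lemm-asymp-normal} for the sequence $\{g_{n,[n/2]}(t)\}_{n\ge 2}$. Real-rootedness and nonnegativity of the coefficients are already in hand from Theorem \ref{thm:rz-lcv-1} and \eqref{g-uni-dn(t)}, so the entire burden is to show that the variance $\sigma_n^2$ of \eqref{eq-var} tends to $+\infty$. Since $[n/2]$ depends on the parity of $n$, I would split into the two subfamilies $g_{2d,d}(t)$ (for even $n=2d$) and $g_{2d+1,d}(t)$ (for odd $n=2d+1$) and treat each by the same method, describing the even case in detail.

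The first step is to reduce the data $f_n(1),f_n'(1),f_n''(1)$ to scalar sequences. Writing $A_d=g_{2d,d}(1)$ and $B_d=g_{2d+1,d}(1)$, I would evaluate the derivative recurrences of Theorem \ref{thm:rec-f'f''} together with \eqref{rec:g-p-n2d+1} and \eqref{rec:g-p-n2d-1} at $t=1$. From \eqref{rec:g-p-n2d-1} one gets $A_{d+1}=2B_d+A_d$, hence $B_{d-1}=(A_d-A_{d-1})/2$; substituting this into \eqref{rec:g-p-n2d-1'} and \eqref{rec:g-p-n2d-1''} at $t=1$ yields the closed forms
\begin{align*}
g_{2d,d}'(1)&=\frac{(d+3)A_d+(d-1)A_{d-1}}{4},\\
g_{2d,d}''(1)&=\frac{(d-1)(2d+1)}{4}\,A_d-\frac{(d-1)(4d-1)}{8}\,(A_d-A_{d-1}),
\end{align*}
with entirely analogous expressions for the odd family in terms of $B_d,B_{d-1}$. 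Thus $\mu_n$ and $\sigma_n^2$ become explicit rational functions of the single ratio $R_d=A_{d-1}/A_d$ (respectively $B_{d-1}/B_d$).

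The second step is to control $R_d$. Eliminating $B$ between $A_{d+1}=2B_d+A_d$ and \eqref{rec:g-p-n2d+1} produces the three-term recurrence $dA_{d+1}=3(2d-1)A_d-(d-1)A_{d-1}$, which is exactly the recurrence of the central Delannoy numbers, and the companion sequence $B_d$ satisfies the same characteristic equation. Its characteristic roots are $3\pm2\sqrt2$, so $A_d/A_{d-1}\to3+2\sqrt2$ and $R_d\to\rho:=3-2\sqrt2$. Feeding $R_d\to\rho$ into the formulas above gives, after simplification, $\mu_{2d}\sim\frac{2-\sqrt2}{2}\,d$, which is linear in $n$; this part is straightforward.

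The hard part is the variance, because of an exact cancellation at leading order. Substituting the limit $R_d\to\rho$ into $\sigma_{2d}^2=g''_{2d,d}(1)/A_d+g'_{2d,d}(1)/A_d-\bigl(g'_{2d,d}(1)/A_d\bigr)^2$, one finds that the coefficient of the leading $d^2$ term equals $(4-4\epsilon-\epsilon^2)/16$ with $\epsilon=1-\rho=2\sqrt2-2$, and a direct check gives $4-4\epsilon-\epsilon^2=0$. Hence the naive leading order cancels identically and $\sigma_n^2$ is only of order $n$. To recover the genuine linear growth I would sharpen the ratio estimate to second order: from the central Delannoy asymptotics $A_d\sim c\,(3+2\sqrt2)^d d^{-1/2}$ one obtains $R_d=\rho\bigl(1+\tfrac{1}{2d}+O(d^{-2})\bigr)$, and inserting this refined expansion converts the vanishing $d^2$-term into a nonzero multiple of $d$. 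Combined with the surviving linear contribution $g'_{2d,d}(1)/A_d$, this yields $\sigma_{2d}^2\sim C\,n$ with an explicit constant $C>0$, and the identical argument for the odd family gives the same conclusion, so $\sigma_n^2\to+\infty$. Theorem \ref{lemm-asymp-normal} then delivers asymptotic normality by local and central limit theorems. The principal obstacle is precisely this cancellation of the leading term, which forces one past the limiting ratio $\rho$ and into the second-order asymptotics of the Delannoy sequence.
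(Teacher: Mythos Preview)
First, a clarification: the stated theorem is Bender's criterion, which the paper does not prove at all---it is simply quoted from \cite{Bender}. Your proposal is not a proof of that criterion; you are instead sketching a proof of Theorem~\ref{thm:asy-Sgp} (asymptotic normality of the coefficients of $g_{n,[n/2]}(t)$), using Theorem~\ref{lemm-asymp-normal} as a black box. I will compare your outline with the paper's proof of Theorem~\ref{thm:asy-Sgp}.

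The two routes agree on the overall plan: split into parities, express $\mu_n$ and $\sigma_n^2$ through a single ratio, and show $\sigma_n^2\to\infty$. They diverge on how to handle the cancellation of the leading $d^2$-term. You correctly observe that substituting the limiting ratio kills the $d^2$-coefficient, and you propose to recover linear growth from the second-order expansion $R_d=\rho\bigl(1+\tfrac{1}{2d}+O(d^{-2})\bigr)$, which you derive from the Delannoy-type asymptotics $A_d\sim c(3+2\sqrt2)^d d^{-1/2}$. This is plausible, but you do not justify that asymptotic (your three-term recurrence is not literally the central Delannoy recurrence, only of the same Birkhoff type), and you never actually verify that the linear coefficient you obtain is strictly positive. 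Those are real gaps, even if fillable.

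The paper sidesteps all of this. It works with the ratio $r_n(1)=f_{n-1}(1)/f_n(1)$ between \emph{consecutive} $n$ (not consecutive $d$), so that the variance takes the form
\[
\sigma_{2m}^2=\frac{m-1}{4}\Bigl(m-(2m-1)r_{2m}(1)-(m-1)r_{2m}(1)^2\Bigr).
\]
Rather than expanding the ratio to second order, the paper proves by a short induction (Lemma~\ref{lem-4}) the \emph{strict} inequality $r_{2m}(1)<-1+\sqrt2$ for all $m\ge 2$. Since the bracket is a strictly decreasing function of $r\ge 0$ and evaluates to exactly $2-\sqrt2$ at $r=-1+\sqrt2$, this yields the clean lower bound $\sigma_{2m}^2>(2-\sqrt2)(m-1)/4$ with no asymptotic analysis whatsoever; the odd case is handled analogously in Lemma~\ref{lem-5}. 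This is shorter, entirely elementary, and completely avoids the second-order expansion that you identify as the principal obstacle.
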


Before proving Theorem \ref{thm:asy-Sgp}, let us first note several lemmas.

\begin{lemma}\label{lemma-1}
For $n\ge 3$ let $f_n(t):=g_{n,[n/2]}(t)$ and
\begin{align}\label{defi:rn(x)}
 r_n(t)=\frac{f_{n-1}(t)}{f_{n}(t)}.
\end{align}
Then \begin{align*}
 \lim_{n\rightarrow +\infty} r_n(1)=-1+\sqrt{2}.
\end{align*}
\end{lemma}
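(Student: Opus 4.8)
The strategy is to convert the statement about the ratio $r_n(1) = f_{n-1}(1)/f_n(1)$ into a recurrence in $n$ and then analyze its limiting behaviour. First I would evaluate one of the recurrence relations from Section \ref{Sec:2} at $t=1$. The cleanest choices are \eqref{rec:g-p-n2d+1} and \eqref{rec:g-p-n2d-1}, since together they link consecutive terms of the sequence $\{g_{n,[n/2]}(t)\}$: for $d\ge 1$,
\begin{align*}
 g_{2d+1,d}(1) &= \frac{2d-1}{d}\, g_{2d,d}(1) + \frac{d-1}{d}\, g_{2d-1,d-1}(1),\\
 g_{2d+2,d+1}(1) &= 2\, g_{2d+1,d}(1) + g_{2d,d}(1).
\end{align*}
Writing $b_n = f_n(1) = g_{n,[n/2]}(1)$, these become $b_{2d+1} = \tfrac{2d-1}{d} b_{2d} + \tfrac{d-1}{d} b_{2d-1}$ and $b_{2d+2} = 2 b_{2d+1} + b_{2d}$. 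Dividing through by $b_{2d}$ (resp. $b_{2d+1}$) expresses $r_{n}(1)^{-1} = b_n/b_{n-1}$ through $r_{n-1}(1)$, giving a first-order (nonautonomous, but with coefficients converging as $d\to\infty$) recurrence for the ratios.

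Next I would pass to the limit. Set $L = \lim_n b_{n-1}/b_n$, assuming for the moment the limit exists. From the even-step relation $b_{2d+2}/b_{2d+1} \to 2 + (b_{2d}/b_{2d+1})$ and the odd-step relation $b_{2d+1}/b_{2d} \to 2 + (b_{2d-1}/b_{2d})$ (the coefficients $\tfrac{2d-1}{d}\to 2$ and $\tfrac{d-1}{d}\to 1$), a common limit $\rho = \lim b_n/b_{n-1}$ must satisfy $\rho = 2 + 1/\rho$, i.e. $\rho^2 - 2\rho - 1 = 0$, whose positive root is $\rho = 1+\sqrt 2$. Hence $L = 1/\rho = \sqrt 2 - 1 = -1+\sqrt2$, which is exactly the claimed value of $\lim_n r_n(1)$.

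The main obstacle is rigorously justifying that the limit exists, rather than merely identifying its value under the assumption of convergence. I would handle this by a monotonicity-and-boundedness argument on the ratios: using the positivity of all coefficients (so $b_n>0$ and the ratios are positive), show that the sequence $r_n(1)$ is trapped in an interval around $\sqrt2-1$ and that the recurrence map $x \mapsto 1/(2+\alpha_n x)$ (with $\alpha_n\to1$) is a contraction near the fixed point for $n$ large, so that any subsequential limit solves $\rho^2-2\rho-1=0$ and must equal $1+\sqrt2$ by positivity. Alternatively, one can interleave the even and odd subsequences: show each is eventually monotone and bounded, identify both limits as roots of the same quadratic, and conclude they coincide. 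Either route is elementary once the $t=1$ specializations of \eqref{rec:g-p-n2d+1} and \eqref{rec:g-p-n2d-1} are in hand; the bulk of the remaining work is the routine verification that the error terms coming from $\tfrac{2d-1}{d}-2$ and $\tfrac{d-1}{d}-1$ do not affect the limit, which follows since they are $O(1/d)$ and the ratios stay bounded away from $0$ and $\infty$.
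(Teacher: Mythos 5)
Your proposal follows essentially the same route as the paper: specialize the recurrences \eqref{rec:g-p-n2d+1} and \eqref{rec:g-p-n2d-1} at $t=1$, merge them into the single recurrence $f_{n+1}(1)=\tfrac{n-1}{\lfloor n/2\rfloor}f_n(1)+\tfrac{\lceil n/2\rceil-1}{\lfloor n/2\rfloor}f_{n-1}(1)$, pass to the ratio recurrence, and identify the limit as the positive root of $A^{-1}=2+A$. The one substantive difference is that the paper's own proof simply writes ``Suppose that $\lim_n r_n(1)=A$'' and never justifies that the limit exists, whereas you correctly flag this as the main remaining obstacle and sketch a workable fix: the iteration map $x\mapsto 1/(a_n+b_n x)$ with $a_n\to 2$, $b_n\to 1$ keeps the positive ratios in a bounded interval and is eventually a contraction there (its derivative is bounded in absolute value by roughly $b_n/a_n^2\to 1/4$), so the $O(1/n)$ perturbations of the coefficients do not disturb convergence to the fixed point $\sqrt2-1$. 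Carrying that step out would make your argument strictly more complete than the published one; the value-identification part is identical.
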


\begin{proof}
By \eqref{rec:g-p-n2d+1} and \eqref{rec:g-p-n2d-1} we obtain that, for $d\ge 1$,
\begin{align*}
f_{2d+1}(1)
=&\, \frac{2d-1}{d} f_{2d}(1) + \frac{d-1}{d} f_{2d-1}(1),\\
 f_{2d+2}(1)
=&\, 2\, f_{2d+1}(1) + f_{2d}(1),
\end{align*}
and hence,
\begin{align}\label{eq:fn+1nn-1}
 f_{n+1}(1)
=\frac{n-1}{\lfloor n/2 \rfloor} f_{n}(1)
 + \frac{\lceil n/2 \rceil-1}{\lfloor n/2 \rfloor} f_{n-1}(1),\quad n\ge 2,
\end{align}
where $\lfloor x \rfloor$ is the greatest integer not exceeding $x$ and $\lceil x \rceil$ is the least integer not less than $x$.
Suppose that
\begin{align*}
 \lim_{n\rightarrow +\infty} r_n(1)=A,
\end{align*}
where $A$ is a finite real number. It follows from \eqref{eq:fn+1nn-1} that
\begin{align*}
 A^{-1} = 2 + A,
\end{align*}
whose zeros are
$$
A_1=-1-\sqrt{2}, \quad A_2=-1+\sqrt{2}.
$$
Since $r_n(1)>0$ for all $n\ge 3$, we have $A=-1+\sqrt{2}$, as desired.
\end{proof}

The following lemma gives an explicit formula to compute the mean of the coefficients of
$g_{n,[n/2]}(t)$.

\begin{lemma}\label{lemma-2}
For $n\ge 3$ let $f_n(t):=g_{n,[n/2]}(t)$ and $\mu_n=\frac{f_n'(1)}{f_n(1)}$.
Then
\begin{align}\label{eq:mun}
 \mu_n
=\left(1-\frac{\lceil n/2 \rceil-1}{\lceil n/2 \rceil+1}\cdot
 \frac{f_{n-1}(1)}{f_n(1)}\right) \frac{\lceil n/2 \rceil+1}{2}.
\end{align}
\end{lemma}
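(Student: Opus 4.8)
The plan is to obtain $\mu_n=f_n'(1)/f_n(1)$ by reading off $f_n'(1)$ from the derivative-carrying recurrences in Theorem~\ref{thm:rec-f'f''}. Because $f_n(t)=g_{n,[n/2]}(t)$ involves a floor, I would split into the two parities: for $n=2d$ I would use \eqref{rec:g-p-n2d-1'}, and for $n=2d+1$ I would use \eqref{rec:g-p-n2d+1'}. A preliminary step in each case is to check, via the symmetry $g_{n,d}(t)=g_{n,n-d}(t)$ recorded in Section~\ref{sec:1}, that the lower-index polynomial on the right of the chosen recurrence really is $f_{n-1}(t)$: for $n=2d$ this is $g_{2d-1,d-1}(t)=f_{2d-1}(t)$, and for $n=2d+1$ it is $g_{2d,d}(t)=f_{2d}(t)$.

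For the even case $n=2d$, clearing the denominator in \eqref{rec:g-p-n2d-1'} gives $(dt+1)f_n(t)=(d-1)t\,f_{n-1}(t)+t(t+1)f_n'(t)$, and evaluating at $t=1$ yields $(d+1)f_n(1)=(d-1)f_{n-1}(1)+2f_n'(1)$, whence
\[
\mu_n=\frac{1}{2}\left((d+1)-(d-1)\,\frac{f_{n-1}(1)}{f_n(1)}\right).
\]
Since $\lceil n/2\rceil=d$ here, we have $d+1=\lceil n/2\rceil+1$ and $d-1=\lceil n/2\rceil-1$, so factoring out $(\lceil n/2\rceil+1)/2$ reproduces \eqref{eq:mun} exactly.

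For the odd case $n=2d+1$, the same manipulation applied to \eqref{rec:g-p-n2d+1'} gives $((d+1)t+1)f_n(t)=dt\,f_{n-1}(t)+t(t+1)f_n'(t)$, so at $t=1$ we get $(d+2)f_n(1)=d\,f_{n-1}(1)+2f_n'(1)$, i.e.
\[
\mu_n=\frac{1}{2}\left((d+2)-d\,\frac{f_{n-1}(1)}{f_n(1)}\right).
\]
Now $\lceil n/2\rceil=d+1$, so $d+2=\lceil n/2\rceil+1$ and $d=\lceil n/2\rceil-1$, and once again factoring out $(\lceil n/2\rceil+1)/2$ yields \eqref{eq:mun}. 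The two parities thus produce the identical closed form, which finishes the argument.

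The computation is essentially mechanical once the right recurrences are selected, so I do not expect a genuine obstacle; the only point demanding a little care is the bookkeeping with floors and ceilings — verifying that $g_{2d-1,d-1}$ (resp.\ $g_{2d,d}$) is indeed $f_{n-1}$, and that the integers $d$, $d\pm1$ translate correctly into $\lceil n/2\rceil$, $\lceil n/2\rceil\pm1$ in both cases so that the two evaluations collapse to a single formula.
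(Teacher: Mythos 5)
Your proposal is correct and follows essentially the same route as the paper: both evaluate the derivative-carrying recurrences \eqref{rec:g-p-n2d-1'} and \eqref{rec:g-p-n2d+1'} at $t=1$, obtaining $(d+1)f_{2d}(1)=(d-1)f_{2d-1}(1)+2f_{2d}'(1)$ and $(d+2)f_{2d+1}(1)=d\,f_{2d}(1)+2f_{2d+1}'(1)$, and then unify the two parities into the single ceiling formula \eqref{eq:mun}. The only cosmetic difference is that you invoke the symmetry $g_{n,d}=g_{n,n-d}$ to identify $g_{2d-1,d-1}$ with $f_{2d-1}$, which is not actually needed since $\lfloor(2d-1)/2\rfloor=d-1$ already.
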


\begin{proof}
By \eqref{rec:g-p-n2d-1'} and \eqref{rec:g-p-n2d+1'}, we find that
\begin{align}
 f_{2d}(1)
=&\, \frac{d-1}{d+1}\, f_{2d-1}(1)
 +\frac{2}{d+1}\, f_{2d}'(1), \quad d\ge 2,\ \textrm{and}\label{rec:f-n2d'1}\\
 f_{2d+1}(1)
=&\, \frac{d}{d+2}\, f_{2d}(1)
 +\frac{2}{d+2}\, f_{2d+1}'(1), \quad d\ge 1. \label{rec:f-n2d+1'1}
\end{align}
It follows from \eqref{rec:f-n2d'1} and \eqref{rec:f-n2d+1'1} that for $n\ge 3$,
\begin{align}\label{eq:fn'1/fn1}
 f_n(1) = \frac{\lceil n/2 \rceil-1}{\lceil n/2 \rceil+1}\, {f_{n-1}(1)}
     + \frac{2}{\lceil n/2 \rceil+1}\, f_n'(1).
\end{align}
By using the above formula, we immediately get \eqref{eq:mun}.
\end{proof}

With the above lemma, we are able to give an explicit formula to compute the variance of the coefficients of
$g_{n,[n/2]}(t)$.

\begin{lemma}\label{lem-3}
For $n\ge 3$ let $f_n(t):=g_{n,[n/2]}(t)$, $\mu_n=\frac{f_n'(1)}{f_n(1)}$ and
$\sigma_n^2=\frac{f_n''(1)}{f_n(1)}+\mu_n-\mu_n^2$.
Then
\begin{align}\label{eq:sigman2}
 \sigma_n^2
=&\, \frac{\lceil n/2 \rceil\left(\lceil n/2 \rceil - 1\right)}{4}
   - \frac{(\lceil n/2 \rceil -1)(2\lfloor n/2 \rfloor -1)}{4}
  \frac{f_{n-1}(1)}{f_{n}(1)} -\frac{\left(\lceil n/2 \rceil-1\right)^2}{4}
  \left(\frac{f_{n-1}(1)}{f_n(1)}\right)^2.
\end{align}
\end{lemma}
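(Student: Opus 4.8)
The plan is to repeat, one order higher, the derivation used for the mean in Lemma~\ref{lemma-2}: extract $f_n''(1)/f_n(1)$ from the second-order recurrences \eqref{rec:g-p-n2d-1''} and \eqref{rec:g-p-n2d+1''}, and then substitute into $\sigma_n^2=f_n''(1)/f_n(1)+\mu_n-\mu_n^2$. Throughout I write $m=\lceil n/2\rceil$ and $R=f_{n-1}(1)/f_n(1)$. First I would set $t=1$ in \eqref{rec:g-p-n2d-1''} and \eqref{rec:g-p-n2d+1''}. Since $f_{n-1}(t)=g_{2d-1,d-1}(t)$ when $n=2d$ and $f_{n-1}(t)=g_{2d,d}(t)$ when $n=2d+1$, this gives
\[
f_{2d}(1)=\frac{4d-1}{2d+1}\,f_{2d-1}(1)+\frac{4}{(d-1)(2d+1)}\,f_{2d}''(1)\qquad(d\ge 2),
\]
\[
f_{2d+1}(1)=\frac{4d+1}{2d+3}\,f_{2d}(1)+\frac{4}{d(2d+3)}\,f_{2d+1}''(1)\qquad(d\ge 1).
\]
The key point is that, using $\lfloor n/2\rfloor+\lceil n/2\rceil=n$, both of these collapse into the single identity
\[
f_n(1)=\frac{2n-1}{2m+1}\,f_{n-1}(1)+\frac{4}{(m-1)(2m+1)}\,f_n''(1),\qquad n\ge 3
\]
(indeed $m=d$ and $2n-1=4d-1$ when $n=2d$, while $m=d+1$ and $2n-1=4d+1$ when $n=2d+1$). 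Solving for $f_n''(1)$ and dividing by $f_n(1)$ then yields
\[
\frac{f_n''(1)}{f_n(1)}=\frac{(m-1)(2m+1)}{4}-\frac{(m-1)(2n-1)}{4}\,R .
\]

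Next, by \eqref{eq:mun} we have $\mu_n=\frac{m+1}{2}-\frac{m-1}{2}\,R$, so that $\mu_n-\mu_n^2$ is an explicit quadratic polynomial in $R$. Adding it to the preceding display and collecting powers of $R$, the constant term simplifies to $\frac{(m-1)(2m+1)+2(m+1)-(m+1)^2}{4}=\frac{m(m-1)}{4}$; the coefficient of $R$ becomes $\frac{(m-1)\bigl(2m-2n+1\bigr)}{4}=-\frac{(m-1)(2\lfloor n/2\rfloor-1)}{4}$, where I use $2n-2m=2\lfloor n/2\rfloor$; and the coefficient of $R^2$ is $-\frac{(m-1)^2}{4}$. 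Substituting $m=\lceil n/2\rceil$ back in gives precisely \eqref{eq:sigman2}.

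The whole computation is routine algebra. The only things that need care are the bookkeeping that merges the two parities into one uniform recurrence --- where the small identity $2n-2\lceil n/2\rceil=2\lfloor n/2\rfloor$ is exactly what turns the linear coefficient into the stated form --- and checking the ranges of validity ($n=2d\ge 4$ for \eqref{rec:g-p-n2d-1''} and $n=2d+1\ge 3$ for \eqref{rec:g-p-n2d+1''}), which together cover all $n\ge 3$ and also keep $\lceil n/2\rceil-1\ne 0$. I do not anticipate any genuine obstacle.
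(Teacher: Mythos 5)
Your proposal is correct and follows the same route as the paper: evaluate \eqref{rec:g-p-n2d-1''} and \eqref{rec:g-p-n2d+1''} at $t=1$, merge the two parities into the single recurrence $f_n(1)=\frac{2n-1}{2\lceil n/2\rceil+1}f_{n-1}(1)+\frac{4}{(\lceil n/2\rceil-1)(2\lceil n/2\rceil+1)}f_n''(1)$, solve for $f_n''(1)/f_n(1)$, and combine with \eqref{eq:mun}. Your algebraic simplification of the three coefficients in $R$ checks out, so nothing further is needed.
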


\begin{proof}
By \eqref{rec:g-p-n2d-1''} and \eqref{rec:g-p-n2d+1''}, we have
\begin{align*}
 f_{2d}(1)
=&\, \frac{4d-1}{2d+1}\, f_{2d-1}(1)
  +\frac{4}{(d-1)(2d+1)}\, f_{2d}''(1),\quad d\ge 2,\\
 f_{2d+1}(1)
=&\, \frac{4d+1}{2d+3}\, f_{2d}(1)
  +\frac{4}{d(2d+3)}\, f_{2d+1}''(1), \quad d\ge 1.
\end{align*}
It follows that
\begin{align}\label{rec:fn1fn''1}
 f_{n}(1)
=\frac{2n-1}{2 \lceil n/2 \rceil +1}\, f_{n-1}(1)
  +\frac{4}{(\lceil n/2 \rceil -1)(2 \lceil n/2 \rceil +1)}\, f_{n}''(1),\quad n\ge 3,
\end{align}
which can be rewritten as
\begin{align}\label{rec:fn''1/fn1}
 \frac{f_{n}''(1)}{f_{n}(1)}
=\left(1-\frac{2n-1}{2 \lceil n/2 \rceil +1}\, \frac{f_{n-1}(1)}{f_{n}(1)}\right)
  \frac{(\lceil n/2 \rceil -1)(2 \lceil n/2 \rceil +1)}{4},\, \quad n\ge 3.
\end{align}

By \eqref{eq:mun} and \eqref{rec:fn''1/fn1} we get
\begin{align*}
 \sigma_n^2
=&\, \left(1-\frac{2n-1}{2 \lceil n/2 \rceil +1}\,
  \frac{f_{n-1}(1)}{f_{n}(1)}\right)
  \frac{(\lceil n/2 \rceil -1)(2 \lceil n/2 \rceil +1)}{4}\\
 &\, +\left(1-\frac{\lceil n/2 \rceil-1}{\lceil n/2 \rceil+1}\cdot
 \frac{f_{n-1}(1)}{f_n(1)}\right) \frac{\lceil n/2 \rceil+1}{2}\\
 &\, -\left(1-\frac{\lceil n/2 \rceil-1}{\lceil n/2 \rceil+1}\cdot
 \frac{f_{n-1}(1)}{f_n(1)}\right)^2 \frac{(\lceil n/2 \rceil+1)^2}{4}.
\end{align*}
By further simplifying, we obtain the desired result.
\end{proof}

In order to study the limit of the variance of the coefficients of
$g_{n,[n/2]}(t)$ as $n\rightarrow +\infty$, we also need the following two lemmas.

\begin{lemma}\label{lem-4}
For $m\geq 2$, we have
\begin{align}\label{ieq:2mN1}
 m-(2m -1)r_{2m}(1)-(m-1) r_{2m}(1)^2> 2-\sqrt{2},
\end{align}
where $r_{2m}(1)={f_{2m-1}(1)}/{f_{2m}(1)}$ and $f_n(t):=g_{n,[n/2]}(t)$.
\end{lemma}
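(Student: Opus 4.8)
The plan is to reduce the inequality to a statement about a single real variable by exploiting the recursion already available. From \eqref{eq:fn+1nn-1} with $n=2m-1$ we obtain
\[
 f_{2m}(1)=\frac{2m-2}{m-1}\,f_{2m-1}(1)+\frac{m-1}{m-1}\,f_{2m-2}(1)
 =2\,f_{2m-1}(1)+f_{2m-2}(1),
\]
so that, dividing by $f_{2m-1}(1)$,
\[
 \frac{1}{r_{2m}(1)}=2+r_{2m-1}(1).
\]
Hence $r_{2m}(1)$ is determined by $r_{2m-1}(1)$, and more importantly $r_{2m}(1)$ lies strictly between $r_{2m}^{\min}$ and the fixed point $-1+\sqrt2$ once we know the sign of the monotone convergence. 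I would first establish, by a short induction using \eqref{eq:fn+1nn-1} (tracking the two parity cases separately), that the sequence $\{r_n(1)\}$ is monotone on even indices and bounded, with $r_{2m}(1)\to -1+\sqrt2$ from one fixed side; since all $f_n(1)>0$ we have $r_{2m}(1)>0$, and a direct check of the base case $m=2$ (using $f_2(1)=1$, $f_3(1)=1$, $f_4(1)=3$, so $r_4(1)=1/3$) pins down which side.

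Next I would treat the left-hand side of \eqref{ieq:2mN1} as a function of the single real parameter $x=r_{2m}(1)$, namely
\[
 \Phi_m(x):=m-(2m-1)x-(m-1)x^2 .
\]
For fixed $m$ this is a downward parabola in $x$, decreasing for $x>0$ (its vertex is at the negative value $x=-\tfrac{2m-1}{2(m-1)}$), so on the range of admissible $x$ it is \emph{decreasing}. Therefore it suffices to bound $x=r_{2m}(1)$ from above by the relevant extreme value and show $\Phi_m$ evaluated there already exceeds $2-\sqrt2$. If the monotonicity analysis shows $r_{2m}(1)<-1+\sqrt2$ for all $m\ge 2$, then $\Phi_m(x)>\Phi_m(-1+\sqrt2)$, and one computes
\[
 \Phi_m(-1+\sqrt2)=m-(2m-1)(\sqrt2-1)-(m-1)(3-2\sqrt2)
 =2-\sqrt2,
\]
where the $m$-dependence cancels exactly because $-1+\sqrt2$ is the fixed point of $x\mapsto 1/(2+x)$; this gives the strict inequality. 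If instead $r_{2m}(1)>-1+\sqrt2$, one bounds $r_{2m}(1)$ above by $r_4(1)=1/3$ (after checking monotonicity the other way) and verifies $\Phi_m(1/3)=m-\tfrac{2m-1}{3}-\tfrac{m-1}{9}=\tfrac{2m+2}{9}\ge \tfrac{6}{9}>2-\sqrt2$ for $m\ge 2$, with the small cases handled by hand.

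The main obstacle is the monotonicity/sign bookkeeping for $\{r_n(1)\}$: the recursion \eqref{eq:fn+1nn-1} mixes the two parities through the floor and ceiling, so a clean invariant must be found that survives one step from an even index to the next even index (equivalently, iterating $x\mapsto 1/(2+x)$ twice, which is an increasing contraction toward $-1+\sqrt2$). Once the correct one-sided convergence is in hand, everything else is the elementary parabola estimate above; the cancellation at the fixed point is what makes the bound exactly $2-\sqrt2$ rather than something messier.
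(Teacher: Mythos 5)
Your overall strategy coincides with the paper's: establish the a priori bound $r_{2m}(1)<-1+\sqrt{2}$ by induction from the base case $r_4(1)=1/3$, then exploit that $\Phi_m(x)=m-(2m-1)x-(m-1)x^2$ is decreasing for $x>0$ (vertex at $x=-\tfrac{2m-1}{2(m-1)}<0$) together with the exact cancellation $\Phi_m(-1+\sqrt{2})=2-\sqrt{2}$. That end-game is correct and is exactly what the paper does.

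The gap is that the one substantive step --- the inductive propagation of $r_{2m}(1)<-1+\sqrt{2}$ --- is announced but not carried out, and the shortcut you offer for it is not literally valid. The even-to-odd step of \eqref{eq:fn+1nn-1} reads $1/r_{2m+1}(1)=2-\tfrac{1}{m}+\tfrac{m-1}{m}\,r_{2m}(1)$, not $2+r_{2m}(1)$, so the even-to-even map is not ``iterating $x\mapsto 1/(2+x)$ twice'' but rather $x\mapsto \bigl(2+\bigl(2-\tfrac{1}{m}+\tfrac{m-1}{m}x\bigr)^{-1}\bigr)^{-1}$. This composed map is still increasing in $x$, and the $m$-dependent perturbation happens to be favorable: at $x=-1+\sqrt{2}$ the inner expression equals $1+\sqrt{2}-\tfrac{\sqrt{2}}{m}<1+\sqrt{2}$, whence the image is $<\bigl(2+\tfrac{1}{1+\sqrt{2}}\bigr)^{-1}=-1+\sqrt{2}$. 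That short verification, which is precisely the paper's argument, is what you must supply; without it the invariant is only asserted, and with the unperturbed map $1/(2+x)$ the check you would do is not the check that is actually needed. Finally, your fallback branch ``if instead $r_{2m}(1)>-1+\sqrt{2}$, bound it above by $r_4(1)=1/3$'' is incoherent, since $1/3<-1+\sqrt{2}$; it is harmless only because the base case already pins the sequence to the correct side.
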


\begin{proof}
Let us first show that
\begin{align}\label{ieq:r2m1<up}
 r_{2m}(1)<-1+\sqrt{2}
\end{align}
holds for $m\ge 2$ by induction.

By \eqref{g-uni-dn(t)}, we have $r_{4}(1)={f_{3}(1)}/{f_{4}(1)}=1/3<-1+\sqrt{2}$. So, \eqref{ieq:r2m1<up} holds for $m=2$. Assume $r_{2m}(1)<-1+\sqrt{2}$. We proceed to show that $r_{2m+2}(1)<-1+\sqrt{2}$.
By \eqref{eq:fn+1nn-1}, we have for $m\ge 1$,
\begin{align*}
f_{2m+1}(1)
=&\, \frac{2m-1}{m} f_{2m}(1) + \frac{m-1}{m} f_{2m-1}(1),\\
 f_{2m+2}(1)
=&\, 2\, f_{2m+1}(1) + f_{2m}(1),
\end{align*}
or equivalently,
\begin{align*}
 \frac{1}{r_{2m+1}(1)}
=&\, 2-\frac{1}{m} + \frac{m-1}{m} r_{2m}(1),\\
 \frac{1}{r_{2m+2}(1)}
=&\, 2\, + r_{2m+1}(1).
\end{align*}
Combining the above two relations yields
\begin{align}\label{ieq:r2m21}
 r_{2m+2}(1)
= \frac{1}{2\, + r_{2m+1}(1)}
= \frac{1}{2\, + \frac{\displaystyle 1}{\displaystyle 2-\frac{1}{m} + \frac{m-1}{m} r_{2m}(1)}}.
\end{align}
By the inductive hypothesis and \eqref{ieq:r2m21}, we see for $m\ge 2$,
{\small{\begin{align*}
 r_{2m+2}(1)
&<\frac{1}{2\, + \frac{\displaystyle 1}{\displaystyle 2-\frac{1}{m} + \frac{m-1}{m} (-1+\sqrt{2})}}=\frac{1}{2\, + \frac{\displaystyle 1}{\displaystyle \sqrt{2}+1-\frac{\sqrt{2}}{m}}}
<\frac{1}{2\, + \frac{\displaystyle 1}{\displaystyle \sqrt{2}+1}}
=-1+\sqrt{2}.
\end{align*}}}
Thus, \eqref{ieq:r2m1<up} is proved. Hence for $m\ge 2$, there holds
\begin{align*}
  &\, m-(2m -1)r_{2m}(1)-(m-1) r_{2m}(1)^2\\
 >&\, m-(2m -1)(-1+\sqrt{2})-(m-1) (-1+\sqrt{2})^2\\
 =&\, 2-\sqrt{2},
\end{align*}
as desired. 
This completes the proof.
\end{proof}

\begin{lemma}\label{lem-5}
For $m\geq 3$, we have
\begin{align}\label{ieq:oddn>0}
 m+1-2m\, r_{2m+1}(1)-m\, r_{2m+1}(1)^2>0.
\end{align}
where $r_{2m+1}(1)={f_{2m}(1)}/{f_{2m+1}(1)}$ and $f_n(t):=g_{n,[n/2]}(t)$.
\end{lemma}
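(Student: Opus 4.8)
The plan is to follow the proof of Lemma~\ref{lem-4} line by line. Just as \eqref{ieq:2mN1} there is extracted from the bound \eqref{ieq:r2m1<up} on $r_{2m}(1)$, the first step is to reduce \eqref{ieq:oddn>0} to the claim
\begin{equation}\label{eq:odd-upper}
 r_{2m+1}(1)<-1+\sqrt{2+1/m},\qquad m\ge 3.
\end{equation}
This reduction is immediate: writing $q_m(x):=m+1-2mx-mx^2=(2m+1)-m(x+1)^2$, the function $q_m$ is strictly decreasing on $(-1,\infty)$ and vanishes precisely at $x=-1\pm\sqrt{2+1/m}$; since every $f_n(1)$ is a sum of positive numbers we have $0<r_{2m+1}(1)<-1+\sqrt{2+1/m}$, hence $q_m\big(r_{2m+1}(1)\big)>q_m\big(-1+\sqrt{2+1/m}\big)=0$, which is exactly \eqref{ieq:oddn>0}. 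So it remains to prove \eqref{eq:odd-upper}.

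I would prove \eqref{eq:odd-upper} by induction on $m$, in exact analogy with the proof of \eqref{ieq:r2m1<up}. For the base case $m=3$ one reads off $f_6(1)=13$ and $f_7(1)=25$ from \eqref{g-uni-dn(t)} (or from \eqref{eq:fn+1nn-1}), so $r_7(1)=13/25$, and $(38/25)^2=1444/625<7/3$ gives $r_7(1)<-1+\sqrt{7/3}$. For the inductive step, eliminating $r_{2m+2}(1)$ between the relation $1/r_{2m+2}(1)=2+r_{2m+1}(1)$ and the case $n=2m+2$ of \eqref{eq:fn+1nn-1} gives
\begin{equation}\label{eq:Psi}
 r_{2m+3}(1)=\Psi_m\big(r_{2m+1}(1)\big),\qquad
 \Psi_m(b):=\frac{(m+1)(b+2)}{(2m+1)b+5m+2},
\end{equation}
and $\Psi_m$ is increasing on $(0,\infty)$, since the numerator of $\Psi_m'(b)$ equals $m(m+1)>0$. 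Thus the inductive hypothesis \eqref{eq:odd-upper} for $m$ yields $r_{2m+3}(1)<\Psi_m\big(-1+\sqrt{2+1/m}\big)$, and the step is complete once one checks
\begin{equation}\label{eq:step}
 \Psi_m\big(-1+\sqrt{2+1/m}\big)\le -1+\sqrt{2+1/(m+1)},\qquad m\ge 3.
\end{equation}

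The hard part will be the verification of \eqref{eq:step}. After clearing the positive denominators and writing $n=m+1$, \eqref{eq:step} becomes the single algebraic inequality
\begin{equation}\label{eq:clean}
 (2n^2-1)+(n^2-1)\sqrt{2n^2-3n+1}\ \ge\ n^2\sqrt{2n^2+n},\qquad n\ge 4,
\end{equation}
which I would settle by squaring twice. Both sides of \eqref{eq:clean} are positive; squaring and isolating the radical gives
\begin{equation*}
 2(2n^2-1)(n^2-1)\sqrt{2n^2-3n+1}\ \ge\ 4n^5-n^4-6n^3+4n^2+3n-2,
\end{equation*}
whose right-hand side is positive for $n\ge 4$, and squaring once more reduces \eqref{eq:clean} to the polynomial inequality $16n^8-40n^7-33n^6+100n^5+4n^4-86n^3+20n^2+24n-9\ge 0$ for $n\ge 4$; this holds because the left-hand side equals $n^6(16n^2-40n-33)+n^3(100n^2+4n-86)+(20n^2+24n-9)$ and each bracket is positive for $n\ge 4$. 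Two remarks explain why this computation cannot be avoided: no constant bound, nor any rational function of $m$ with a rational limit, can replace $-1+\sqrt{2+1/m}$ in \eqref{eq:odd-upper}, because $r_{2m+1}(1)\to -1+\sqrt2$ from above (by Lemmas~\ref{lemma-1} and \ref{lem-4}) while $-1+\sqrt{2+1/m}\to -1+\sqrt2$ from above as well, so the $\sqrt2$‑shaped bound with its $1/m$ correction is forced; and the two sides of \eqref{eq:step} differ only by a quantity of order $1/m$, so the two nested radicals really must be cleared. With \eqref{eq:odd-upper} in hand, \eqref{ieq:oddn>0} follows as in the first paragraph, and Lemmas~\ref{lemma-1}--\ref{lem-5} together with Theorem~\ref{lemm-asymp-normal} give $\sigma_n^2\to+\infty$, hence Theorem~\ref{thm:asy-Sgp}.
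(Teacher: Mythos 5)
Your proof is correct and follows essentially the same route as the paper: the same reduction of \eqref{ieq:oddn>0} to the upper bound $r_{2m+1}(1)<-1+\sqrt{2+1/m}$, the same base case $r_7(1)=13/25$, and the same induction via the recurrence \eqref{eq:fn+1nn-1}. The only difference is at the final step \eqref{eq:step}, which the paper checks with a Maple \texttt{solve} command while you clear the radicals by hand and reduce it to an explicit polynomial positivity check (your degree-$8$ polynomial is the exact difference of squares after cancelling a factor of $n^2$), which makes the argument self-contained.
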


\begin{proof}
Note that, for $m\geq 3$, the inequality \eqref{ieq:oddn>0} holds if and only if
\begin{align}\label{ieq:dr2m1}
 -1-\frac{\sqrt{2m^2+m}}{m} < r_{2m+1}(1) < -1+\frac{\sqrt{2m^2+m}}{m}.
\end{align}
Clearly, the first inequality in \eqref{ieq:dr2m1} holds, since $r_{2m+1}(1)>0$ for $m\ge 1$. It remains to prove that
\begin{align}\label{ieq:r2m11ub}
 r_{2m+1}(1) < -1+\frac{\sqrt{2m^2+m}}{m}
\end{align}
for $m\geq 3$.
First, for $m=3$, one can use \eqref{defi:rn(x)} and \eqref{g-uni-dn(t)} to verify that
$$
 r_{7}(1)-\left(-1+\frac{\sqrt{21}}{3}\right)=\frac{38}{25}-\frac{1}{3}\sqrt{21}<0.
$$
Assuming \eqref{ieq:r2m11ub} for $m$, 
we continue to show that it also holds for $m+1$, namely
\begin{align*}
 r_{2m+3}(1) < -1+\frac{\sqrt{2(m+1)^2+m+1}}{m+1}.
\end{align*}
By the recurrence relation \eqref{eq:fn+1nn-1}, we have for $m\ge 1$,
\begin{align*}
 \frac{1}{r_{2m+3}(1)}
=&\, \frac{2m+1}{m+1} + \frac{m}{m+1} r_{2m+2}(1),\\
 \frac{1}{r_{2m+2}(1)}
=&\, 2\, + r_{2m+1}(1).
\end{align*}
Thus,
\begin{align*}
 {r_{2m+3}(1)}
=\frac{1}{\displaystyle \frac{2m+1}{m+1} + \frac{m}{m+1}\cdot \frac{1}{2\, + r_{2m+1}(1)}}.
\end{align*}
By the inductive hypothesis, we see that for $m\ge 1$,
\begin{align*}
 r_{2m+3}(1)
<\frac{1}{\displaystyle \frac{2m+1}{m+1} + \frac{m}{m+1}\cdot \frac{m}{m+\sqrt{2m^2+m}}}.
\end{align*}
It suffices to prove for $m\ge 3$,
\begin{align}\label{ieq:vry}
 \frac{1}{\displaystyle \frac{2m+1}{m+1} + \frac{m}{m+1}\cdot \frac{m}{m+\sqrt{2m^2+m}}}
< -1+\frac{\sqrt{2(m+1)^2+m+1}}{m+1}.
\end{align}
The inequality \eqref{ieq:vry} can be verified by the following {\tt Maple} command:
{\scriptsize
\begin{align*}
\hskip -27pt [>
\mathbf{evalf}\left(\mathbf{solve}\left(\left\{\frac{\displaystyle 1}{\displaystyle \frac{2m+1}{m+1} + \frac{m}{m+1}\cdot \frac{m}{m+\sqrt{2m^2+m}}}
+1-\frac{\sqrt{2(m+1)^2+m+1}}{m+1}<0,\ m>0\right\},m\right)\right)
\end{align*}
}%
It outputs
$$
\{1.393714057 < m\}
$$
which implies that \eqref{ieq:vry} holds for $m\ge 3$.
This completes the proof.
\end{proof}

We are now in a position to show a proof of Theorem \ref{thm:asy-Sgp}.

\begin{proof}[Proof of Theorem \ref{thm:asy-Sgp}]

For $n\ge 3$ let $f_n(t):=g_{n,[n/2]}(t)$, $\mu_n=\frac{f_n'(1)}{f_n(1)}$ and
$\sigma_n^2=\frac{f_n''(1)}{f_n(1)}+\mu_n-\mu_n^2$.
By Theorem \ref{lemm-asymp-normal} it suffices to show that $\sigma_n^2 \rightarrow +\infty$ as $n\rightarrow +\infty$.
To this end, we only need to show that
 $\lim_{m\rightarrow +\infty} \sigma_{2m}^2 = +\infty$ and
 $\lim_{m\rightarrow +\infty} \sigma_{2m+1}^2 = +\infty.$

By Lemmas \ref{lem-3} and \ref{lem-4} we see that for $m\ge 2$,
\begin{align}
 \sigma_{2m}^2
=\frac{m-1}{4}\big(m-(2m -1)r_{2m}(1)-(m-1) r_{2m}(1)^2\big)
> (2-\sqrt{2})\frac{m-1}{4},\label{simga2m2}
\end{align}
where $r_n(x)$ is defined by \eqref{defi:rn(x)}.
It follows from \eqref{simga2m2} that $\sigma_{2m}^2\rightarrow +\infty$ as $m\rightarrow +\infty$.

By Lemmas \ref{lem-3} and \ref{lem-5} we have
\begin{align*}
 \sigma_{2m+1}^2
=&\, \frac{m}{4}\big(m+1-(2m -1)r_{2m+1}(1)-m\, r_{2m+1}(1)^2\big)> \frac{m}{4} r_{2m+1}(1),
\end{align*}
for $m\ge 3$.
Since
$\lim_{m\rightarrow +\infty} r_{2m+1}(1)=-1+\sqrt{2}>0$ by Lemma \ref{lemma-1}, 
it follows that $\sigma_{2m+1}^2\rightarrow +\infty$ as $m\rightarrow +\infty$.
This completes the proof.
\end{proof}

We propose a conjecture to conclude this paper.
\begin{conjecture}
If $d=d(n)$ is a real function of $n$ such that $d(n)\rightarrow +\infty$ as $n\rightarrow +\infty$, then the coefficient of Speyer's $g$-polynomial $g_{n,d(n)}(t)$, namely $S_i(n,d(n))$, is asymptotically normal by local and central limit theorems.
\end{conjecture}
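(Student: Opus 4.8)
The plan is to follow the route of Section~\ref{Sec:4}. Assume, as throughout the paper, that $1\le d(n)\le[n/2]$ (using $g_{n,d}=g_{n,n-d}$); then $d(n)\to+\infty$ forces $\deg g_{n,d(n)}=d(n)\to+\infty$, which is what makes the statement plausible. Put $f_n(t):=g_{n,d(n)}(t)$, $\mu_n:=f_n'(1)/f_n(1)$, and $\sigma_n^2:=f_n''(1)/f_n(1)+\mu_n-\mu_n^2$. By Theorem~\ref{thm:rz-lcv-1} the polynomial $f_n(t)$ has only real zeros, so Theorem~\ref{lemm-asymp-normal} applies to $\{f_n\}$, and the entire problem reduces to proving $\sigma_n^2\to+\infty$.

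It is convenient to reformulate this probabilistically. Formula \eqref{g-uni-dn(t)} can be rewritten as $S_i(n,d)=\binom{n-i-1}{d-i}\binom{n-d-1}{i-1}$; in particular $f_n$ has nonnegative coefficients, $f_n(0)=0$, and (for $d(n)\ge 2$) $[t^1]f_n=\binom{n-2}{d(n)-1}>0$. Hence, by real-rootedness, $f_n(t)/f_n(1)=\prod_{j=1}^{d(n)}\bigl(p_jt+1-p_j\bigr)$ with $p_j=p_j^{(n)}\in(0,1]$, so $f_n(t)/f_n(1)$ is the probability generating function of $I_n=\sum_{j=1}^{d(n)}X_j$ for independent $X_j\sim\mathrm{Bernoulli}(p_j)$; equivalently $\Pr(I_n=i)=S_i(n,d(n))/f_n(1)$. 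Then $\mu_n=\mathbb{E}\,I_n=\sum_j p_j$ and $\sigma_n^2=\operatorname{Var}I_n=\sum_j p_j(1-p_j)$, a sum of $d(n)\to+\infty$ nonnegative terms. The task is therefore to rule out the degenerate case in which the parameters $p_j^{(n)}$ — equivalently, after the substitution $p=1/(1+a)$, the moduli $a_j$ of the zeros $-a_j$ of $g_{n,d(n)}(t)$ — all accumulate near $0$ or near $1$.

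To produce the estimate I would mimic the derivations of Lemmas~\ref{lemma-1}--\ref{lem-5}: evaluate, and then differentiate once and twice, the triangular recursion \eqref{rec:g-p-nd1} (supplemented by \eqref{rec:g-p-n2}) at $t=1$, obtaining a linear system for the triple $\bigl(g_{n,d}(1),g_{n,d}'(1),g_{n,d}''(1)\bigr)$ under the lattice moves $(n,d)\mapsto(n,d-1)$ and $(n,d)\mapsto(n-1,d-1)$; starting from the boundary values $g_{m,1}(t)=t$ this reaches $(n,d(n))$. Introducing ratios such as $\rho_{n,d}=g_{n,d-1}(1)/g_{n,d}(1)$ and $r_{n,d}=g_{n-1,d}(1)/g_{n,d}(1)$, one then writes $\mu_n$ and $\sigma_n^2$ in terms of these ratios and pins down their limits (or two-sided bounds) by fixed-point and induction arguments parallel to Lemmas~\ref{lem-4} and~\ref{lem-5}. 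The anticipated outcome is $\sigma_n^2=\Theta(d(n))$: a Laplace-type analysis of the sharply log-concave sums $f_n(1),f_n'(1),f_n''(1)$ at the unique interior mode of $i\mapsto S_i(n,d(n))$ predicts that $\sigma_n^2$ is asymptotic to a positive multiple of $d(n)$ — the multiple depending on $\lim d(n)/n$ and reducing to the value implicit in Theorem~\ref{thm:asy-Sgp} when $d(n)\sim n/2$ — which in any case gives $\sigma_n^2\to+\infty$. A self-contained alternative that sidesteps the recursion bookkeeping is to estimate the three sums directly by the Laplace/saddle-point method, exploiting strong log-concavity of $i\mapsto S_i(n,d(n))$.

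The main obstacle is uniformity in the arbitrary function $d(n)$: the coefficients of \eqref{rec:g-p-nd1} and \eqref{rec:g-p-n2}, and the fixed points of the ratio recursions, all depend on $d/n$, and the regime $d(n)=o(n)$ is delicate. There $n+1-2d\sim n$, $n-d\sim n$, and so on, several terms nearly cancel, and one must verify that the variance still grows like $d(n)$ rather than being killed by the cancellation. In the probabilistic picture this amounts to showing that a positive proportion of the $d(n)$ parameters $p_j^{(n)}$ stay bounded away from both $0$ and $1$ — equivalently, that the zeros of $g_{n,d(n)}(t)$ do not all cluster at $0$ and at $\infty$ — and establishing such a two-sided separation uniformly in $d(n)$ is, I expect, the heart of the matter. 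Once $\sigma_n^2\to+\infty$ is in hand, Theorem~\ref{lemm-asymp-normal} delivers both the local and the central limit theorem.
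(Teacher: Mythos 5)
This statement is the paper's concluding \emph{conjecture}; the paper offers no proof of it, so there is nothing to compare your argument against, and the question is simply whether your proposal constitutes a proof. It does not: it is a sound reduction plus a research plan whose decisive step is left open. The reduction itself is correct and is the natural one. By Theorem \ref{thm:rz-lcv-1} and the product formula $S_i(n,d)=\binom{n-i-1}{d-i}\binom{n-d-1}{i-1}\ge 0$, the normalized polynomial $f_n(t)/f_n(1)$ with $f_n=g_{n,d(n)}$ is indeed the probability generating function of a sum $I_n=\sum_{j=1}^{d(n)}X_j$ of independent Bernoulli variables with parameters $p_j^{(n)}\in(0,1]$, and Theorem \ref{lemm-asymp-normal} reduces everything to $\sigma_n^2=\sum_j p_j^{(n)}(1-p_j^{(n)})\rightarrow+\infty$. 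But that last statement is precisely the content of the conjecture, and your proposal establishes it nowhere: the claim $\sigma_n^2=\Theta(d(n))$ is introduced with ``the anticipated outcome is'' and ``I expect,'' and you yourself identify the missing ingredient --- a two-sided separation showing that a positive proportion of the $p_j^{(n)}$ (equivalently, of the zeros of $g_{n,d(n)}$) stay bounded away from the degenerate values uniformly in the arbitrary function $d(n)$ --- as ``the heart of the matter.'' Neither the proposed ratio-recursion bookkeeping in the two lattice directions nor the alternative Laplace/saddle-point estimate of $f_n(1)$, $f_n'(1)$, $f_n''(1)$ is carried out, and the paper's own Lemmas \ref{lemma-1}--\ref{lem-5} exploit the special one-parameter recursions \eqref{rec:g-p-n2d+1}, \eqref{rec:g-p-n2d-1} and \eqref{rec:g-p-n2d-1'}--\eqref{rec:g-p-n2d+1''} available only along the diagonal $d=[n/2]$, so they do not transfer to general $d(n)$ without new work.

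Two smaller points. First, the conjecture as literally stated is false without your normalization: $d(n)=n-2$ satisfies $d(n)\rightarrow+\infty$, yet $g_{n,n-2}=g_{n,2}$ has degree $2$ and its two coefficients cannot be asymptotically normal; so the hypothesis must be read as $\min(d(n),n-d(n))\rightarrow+\infty$, which you do implicitly but should state as a correction to the statement rather than a harmless convention. Second, the regime you flag as delicate, $d(n)=o(n)$, really is where a proof must do something new: for fixed $d$ the limit $\lim_{n\to\infty}S_i(n,d)/\sum_j S_j(n,d)$ concentrates (the top coefficient dominates), so the variance contributed ``per zero'' degenerates as $d/n\rightarrow 0$ for fixed $d$, and one must quantify how slowly it degenerates as $d\rightarrow\infty$ simultaneously. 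Until that estimate is proved, the statement remains a conjecture.
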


\section*{Acknowledgments}
The authors wish to thank Matthew H. Y. Xie for helpful comments and informing them about \cite{Ferroni-Schroter-2023}.

\end{document}